\theoremstyle{plain}
\newtheorem{theorem}{Theorem}
\newtheorem{lemma}{Lemma}
\theoremstyle{remark}
\newtheorem{rem}{Remark}
\newtheorem{definition}{Definition}
\newtheorem{assumption}{Assumption}
\newtheorem{example}{Example}
\newcommand{\lfrac}[2]{\genfrac{}{}{0pt}{}{#1}{#2}}
\begin{document}
\title{Non-central limit theorems for functionals of random fields on hypersurfaces.\footnote{Short title: Non-central asymptotics for random fields on hypersurfaces.} }
\date{}
\author{Andriy Olenko and Volodymyr Vaskovych}
\maketitle

\begin{center}{Department of Mathematics and Statistics,
		La Trobe University,\\ Melbourne, 3086, Australia \\
		a.olenko@latrobe.edu.au}
\end{center}

\vspace{6cm}
\section*{Acknowledgements}

Andriy Olenko was partially supported under the Australian Research Council's Discovery Projects funding scheme (project number  DP160101366) and the La Trobe University DRP Grant in Mathematical and Computing Sciences.
\newpage
\begin{abstract}
		This paper derives non-central asymptotic results for non-linear integral functionals of homogeneous isotropic Gaussian random fields defined on hypersurfaces in $\mathbb{R}^d$. We obtain the rate of convergence for these functionals. The results extend recent findings for solid figures. We apply the obtained results to the case of sojourn measures and demonstrate different limit situations. 
	\end{abstract}
{\bf Keywords:} Non-central limit theorems, Random field, Long-range dependence, Hermite-type distribution, Sojourn measures.\\
{\bf MSC:}  Primary 60G60; Secondary 60F05, 60G12
\section{Introduction} In this article we study real-valued homogeneous isotropic Gaussian random fields with long-range dependence. Long-range dependence is a well-established empirical phenomenon which appears in various fields, such as physics,  hydrology, signal processing, network traffic analysis, telecommunications, finance, econometrics, just to name a few.  See \cite{Douk}, \cite{Iv}, \cite{Wack} for more details. 

Various functionals of random fields have been a topic of interest in recent years, see, for example, \cite{Anh}, \cite{Ole}, \cite{LeoRMT}. In this research, we focus on non-linear integral functionals of Gaussian random fields defined on hypersurface sets. These functionals play an important role in various fields, for example, in cosmology, meteorology and  image analysis. It was shown, see \cite{Dob}, \cite{Taq1}, and \cite{Taq2}, that these functionals can produce non-Gaussian limits and require normalizing coefficients different from those in central limit theorems. For the more detailed overview of the problem, history of development, various approaches and existing results one can refer to \cite{New} and references therein.

In this research we use results from \cite{New}, \cite{Souj}, \cite{Bul} and obtain analogous asymptotics for the case of hypersurfaces. Most of the research conducted in this area considered only random fields defined on solid figures. Limit distributions for the functionals on spheres, which are a particular case of hypersurfaces, were studied in  \cite{Iv}. However, there were no results about the rate of convergence for the case of hypersurfaces. In this article we consider a general case of hypersurface sets. We are interested in both limit distributions, and rates of convergence to these limits. We prove that, analogously to the solid figure situation, the limit distribution is a Hermite-type distribution and it depends only on the Hermite rank of the integrands. However, while for all integrands with the same Hermite rank the limit distribution remains the same, we demonstrate that the rate of convergence can be different. To prove the results we need some fine geometric properties of hypersurfaces. Specifically, we use the rates of the average decay of the Fourier transform of surface measures, see \cite{Ios} and \cite{IosRud}.

Geometric properties of random fields on hypersurfaces are of interest in many applied areas, such as medical imaging, meteorology, and astrophysics. Many of these properties can be studied by the use of sojourn measures. Extensive literature is available concerning this topic, for some examples see \cite{Bul}, \cite{Adl}, \cite{Nov}, \cite{Mar}. Recently, non-Gaussian limits for the first Minkowski functional of random fields defined on 3-dimensional spheres were discussed in \cite{LeoNew}. In this article we obtain limits for sojourn measures of random fields defined on arbitrary hypersurfaces. We provide examples when these limits are Gaussian and Hermite-type of the rank 2, 3, and 4.

Various authors, see \cite{DavMart}, \cite{NourPol}, \cite{Zint} and the references therein, studied a distance between two Wiener-Ito integrals of the same rank. These result can be used to estimate the rate of convergence when the integrands are Hermite polynomials of Gaussian random fields. We estimate the Kolmogorov's distance between two Wiener-Ito integrals of the same rank and provide a small comparison of the existing results. 

The article is organized as follows. In Section~\ref{defs} we recall some basic definitions and assumptions that are required to present our main results. Section~\ref{abs} studies the asymptotic behavior of the considered functionals. Section~\ref{sojs} demonstrates how results from Section~\ref{abs} can be applied in the case of sojourn measures. Section~\ref{convs} provides the results on the rate of convergence.

\section{Definitions and assumptions}\label{defs}

In this section we provide main definitions and assumptions that are used in this work.

In what follows $|\cdot|$ and $\|\cdot\|$ denote the Lebesgue measure and the Euclidean distance in $\mathbb{R}^d$, $d\geq 2$, respectively. Let $B(y, s)$ be a $d$-dimensional ball with centre $y$ and radius $s$, and let $\rm{S}_{d-1}(r)$ be a sphere in $\mathbb{R}^{d}$ with the radius $r$.  We use the symbols $C$ and $\delta$ to denote constants which are not important for our exposition. Moreover, the same symbol may be used for different constants appearing in the same proof.

Let $\Delta$ be a bounded set in $\mathbb{R}^d,\, d\geq 2,$ with a boundary $\partial\Delta$. Let $\Delta(r)$, $r > 0,$ be the homothetic image of the set $\Delta$ with the centre of homothety at the origin and the
coefficient $r > 0$, that is $|\Delta(r)| = r^d|\Delta|$. Let $\partial\Delta$ be an Ahlfors-David regular hypersurface in~$\mathbb{R}^d$. One can find more information about Ahlfors-David regular sets in \cite{ReyBla} and references therein.

\begin{definition} {\rm\cite{ReyBla}}
	A closed hypersurface $\partial\Delta$ is called Ahlfors-David regular if there exists a constant $C$ such that for any $y\in\partial\Delta$ and $s >0$
	\begin{equation}\label{adr}
	C^{-1}s^{d-1} < \int\limits_{\partial\Delta\cap B(y, s)}\mathrm{d}\sigma(x) < Cs^{d-1},
	\end{equation}
	where $\mathrm{d}\sigma(\cdot)$ is the $d-1$-dimensional Lebesgue measure on the hypersurface set.
\end{definition}

Let $\Delta$ be a convex set, a polyhedron, or have a sufficiently smooth boundry, for example, from $C^{3/2}$ class. Let
\begin{equation*}
\mathcal{K}(x):=\int\limits_{\partial\Delta }e^{i<x,u>} \mathrm{d}\sigma(u),\quad
x\in\mathbb{R}^{d}.
\end{equation*}
In \cite{Ios} and \cite{IosRud} the rate of convergence was given for the average decay of the Fourier transforms $\mathcal{K}(\cdot)$
\begin{equation}\label{l2a}
\int\limits_{S_{d-1}(1)}|\mathcal{K}(\omega r)|^2\,\mathrm{d}\omega\leq Cr^{-d+1}.
\end{equation}  
In the discussion authors even hypothesised that this result should also hold for Lipschitz boundaries of compact sets, which is a much weaker condition.

The proof of the main results of our paper also remains valid for other hypersurfaces satisfying conditions (\ref{adr}) and (\ref{l2a}).

We consider a measurable mean-square continuous zero-mean homogeneous
isotropic real-valued random field $\eta (x),\ x\in \mathbb{R}^{d},$ defined
on a probability space $(\Omega ,\mathcal{F},P),$ with the covariance function 
\begin{equation*}
\text{\rm{B}}(r):=\mathrm{Cov}\left( \eta (x),\eta (y)\right)
=\int_{0}^{\infty }Y_{d}(rz)\,\mathrm{d}\Phi (z),\ x,y\in \mathbb{R}^{d},
\end{equation*}%
where $r:=\left\Vert x-y\right\Vert ,$ $\Phi (\cdot )$ is the isotropic
spectral measure, the function $Y_{d}(\cdot )$ is defined by 
\begin{equation*}
Y_{d}(z):=2^{(d-2)/2}\Gamma \left( \frac{d}{2}\right) \ J_{(d-2)/2}(z)\
z^{(2-d)/2},\quad z\geq 0,
\end{equation*}%
and $J_{(d-2)/2}(\cdot )$ is the Bessel function of the first kind of order $%
(d-2)/2.$

\begin{definition}
	The random field $\eta (x),$ $x\in \mathbb{R}^{d},$ defined above is said
	to possess an absolutely continuous spectrum if there exists a function $%
	f(\cdot )$ such that 
	\begin{equation*}
	\Phi (z)=2\pi ^{d/2}\Gamma ^{-1}\left( d/2\right) \int_{0}^{z}u^{d-1}f(u)\,%
	\mathrm{d}u,\quad z\geq 0,\quad u^{d-1}f(u)\in L_{1}(\mathbb{R}_{+}).
	\end{equation*}%
	The function $f(\cdot )$ is called the isotropic spectral density function
	of the field~$\eta (x).$ The field~$\eta (x)$ with an absolutely continuous spectrum has the isonormal spectral representation 
	\begin{equation*}
	\eta (x)=\int_{\mathbb{R}^d}e^{i<\lambda ,x>}\sqrt{f\left( \left\| \lambda
		\right\| \right) }W(\mathrm{d}\lambda ),
	\end{equation*}
	where $W(\cdot )$ is the complex Gaussian white noise random measure on $%
	\mathbb{R}^d.$
\end{definition}

Let $U$ and $V$ be two independent and uniformly distributed on the hypersurface $
\partial\Delta (r)$ random vectors. We denote by $\psi _{\Delta (r)}(\rho
),$ $\rho \geq 0,$ the pdf of the distance $\left\| U-V\right\| $ between $U$ and $V.$   Note that $\psi _{\Delta (r)}(\rho
)=0$ if $\rho > diam\left\{ \Delta (r)
\right\}.$
Using the above notations, we obtain the  representation
\[
\int\limits_{\partial\Delta (r)}\int\limits_{\partial\Delta (r)}G(\left\| x-y\right\| )\,d\sigma(x)\,
d\sigma(y)=
\left| \partial\Delta \right| ^{2}r^{2d-2}\mathbf{E}\ G(\left\| U-V\right\| )=
\]
\begin{equation}\label{dint}
=\left| \partial\Delta \right| ^{2}r^{2d-2}\int_{0}^{diam\left\{ \Delta
	(r)\right\} }G(\rho )\ \psi _{\Delta (r)}(\rho )d\rho.
\end{equation}

\begin{rem}\cite{Iv} 
	If $\partial\Delta(r) = \rm{S}_{d-1}(r)$, then 	
	\[\psi _{\Delta (r)}(\rho) = \frac{1}{\sqrt{\pi}}\Gamma\left(\frac{d}{2}\right)\Gamma^{-1}\left(\frac{d-1}{2}\right)r^{1-d}\rho^{d-2}\left(1 - \frac{\rho ^2}{4u^2}\right)^{\frac{d-3}{2}}, \quad 0 < \rho < 2r.\]
\end{rem}

Let $H_{k}(u)$, $k\geq 0$, $u\in \mathbb{R}$, be the Hermite polynomials,
see~\cite{pec}. These polynomials form a complete orthogonal system in the Hilbert space 
\begin{equation*}
{L}_{2}(\mathbb{R},\phi (w)\,dw)=\left\{ G:\int_{\mathbb{R}}G^{2}(w)\phi
(w)\,\mathrm{d}w<\infty \right\} ,\quad \phi (w):=\frac{1}{\sqrt{2\pi }}e^{-%
	\frac{w^{2}}{2}}.
\end{equation*}

An arbitrary function $G(w)\in {L}_{2}(\mathbb{R},\phi(w )\, dw)$ admits the
mean-square convergent expansion 
\begin{equation*}  \label{herm}
G(w)=\sum_{j=0}^{\infty }\frac{C_{j}H_{j}(w) }{j !}, \qquad C_{j }:=\int_{%
	\mathbb{R}}G(w)H_{j }(w)\phi ( w )\,\mathrm{d}w.
\end{equation*}

By Parseval's identity 
\begin{equation*}  \label{par}
\sum_{j=0}^\infty\frac{C_{j}^{2}}{j !} =\int_{\mathbb{R}}G^2(w) \phi ( w )\,%
\mathrm{d}w.
\end{equation*}

\begin{definition} \rm{\cite{Taq1}} Let $G(w)\in {L}_{2}(\mathbb{R},\phi (w)\,dw)$ and
	assume there exists an integer $\kappa \in \mathbb{N}$ such that $C_{j}=0$, for all $%
	0\leq j\leq \kappa -1,$ but $C_{\kappa }\neq 0.$ Then $\kappa $ is called
	the Hermite rank of $G(\cdot )$ and is denoted by $H\mbox{rank}\,G.$
\end{definition}

We investigate the random variables
\[
K_r :=\int\limits_{\partial\Delta(r)}G(\eta (x))d\sigma(x) \quad \mbox{and}
\quad K_{r,\kappa} := \frac{C_\kappa}{k!}\int\limits_{\partial\Delta(r)}H_\kappa (\eta (x))d\sigma(x),
\]
where $C_\kappa$ is a $\kappa$-th coefficient of the Hermite series of the function $G(\cdot)$.

\begin{rem}{\label{rem1}}
	If $(\xi _{1},\ldots ,\xi _{2p})$ is a $2p$-dimensional
	zero-mean Gaussian vector with 
	\begin{equation*}
	\mathbf{E}\xi _{j}\xi _{k}=%
	\begin{cases}
	1, & \mbox{if }k=j, \\ 
	r_{j}, & \mbox{if }k=j+p\ \mbox{and }1\leq j\leq p, \\ 
	0, & \mbox{otherwise,}%
	\end{cases}%
	\end{equation*}%
	then 
	\begin{equation*}
	\mathbf{E}\ \prod_{j=1}^{p}H_{k_{j}}(\xi _{j})H_{m_{j}}(\xi
	_{j+p})=\prod_{j=1}^{p}\delta _{k_{j}}^{m_{j}}\ k_{j}!\ r_{j}^{k_{j}}.
	\end{equation*}
\end{rem}

If $G(w)\in \mathbf{L}_{2}(\mathbb{R}^{p},\phi
(\left\| w\right\| )\, dw)$ and $\mathbf{E}G(\eta(x)) = 0$ then the integral functional $K_r$
can be represented as
\[
K_r=\sum_{j=1}^{\infty }\frac{C_{j}}{j !}
\int\limits_{\partial\Delta (r)}H_{j}(\eta (x))\, d\sigma(x).\]
Therefore $\mathbf{E}K_r=0$ and by Remark~\ref{rem1} the variance is equal
\begin{equation}\label{varg}
\mathbf{Var}\, K_r=
\sum_{j=1}^{\infty }\frac{C_{j }^{2}}{j !}
\int\limits_{\partial\Delta (r)}\int\limits_{\partial\Delta (r)}\mathrm{B}^j(\left\| x-y\right\|
)d\sigma(x)d\sigma(y).
\end{equation}

\begin{definition}
	\rm{\cite{bin}} A measurable function $L:(0,\infty )\rightarrow
	(0,\infty )$ is said to be slowly varying at infinity if for all $t>0,$%
	\begin{equation*}
	\lim\limits_{\lambda \rightarrow \infty }\frac{L(\lambda t)}{L(\lambda )}=1.
	\end{equation*}
\end{definition}

By the representation theorem~\cite[Theorem 1.3.1]{bin}, there exists $C > 0$
such that for all $r \ge C$ the function $L(\cdot)$ can be written in the
form 
\begin{equation*}
L(r) = \exp \left(\zeta_1(r) + \int_C^r \frac{\zeta_2(u)}{u}\,\mathrm{d}u
\right),
\end{equation*}
where $\zeta_1(\cdot)$ and $\zeta_2(\cdot)$ are such measurable and bounded
functions that
$\zeta_2(r)\to 0$ and $\zeta_1(r)\to C_0$ $(|C_0|<\infty),$
when $r\to \infty.$

If $L(\cdot )$ varies slowly, then $r^{a}L(r)\rightarrow \infty ,$ $%
r^{-a}L(r)\rightarrow 0$ for an arbitrary $a>0$ when $r\rightarrow \infty ,$
see Proposition 1.3.6 \cite{bin}.

\begin{definition}
	\cite{bin} A measurable function $g:(0,\infty )\rightarrow (0,\infty )$ is
	said to be regularly varying at infinity, denoted $g(\cdot )\in R_{\tau }$,
	if there exists $\tau $ such that, for all $t>0,$ it holds that 
	\begin{equation*}
	\lim\limits_{\lambda \rightarrow \infty }\frac{g(\lambda t)}{g(\lambda )}%
	=t^{\tau }.
	\end{equation*}
\end{definition}

\begin{definition}
	\label{sr2}\cite{bin} Let	$g:(0,\infty )\rightarrow (0,\infty )$ be a measurable function and $g(x)\rightarrow 0$
	as $x\rightarrow \infty$. A slowly varying function $L(\cdot )$ is said to be slowly varying with
	remainder of type 2, or that it belongs to the class SR2, if 
	\begin{equation*}
	\forall \lambda >1:\quad \frac{L(\lambda x)}{L(x)}-1\sim k(\lambda
	)g(x),\quad x\rightarrow \infty ,
	\end{equation*}%
	for some function $k(\cdot )$.
	
	If there exists $\lambda $ such that $k(\lambda )\neq 0$ and $k(\lambda \mu
	)\neq k(\mu )$ for all $\mu $, then $g(\cdot )\in R_{\tau }$ for some $\tau
	\leq 0$ and $k(\lambda )=ch_{\tau }(\lambda )$, where 
	\begin{equation}
	h_{\tau }(\lambda )=%
	\begin{cases}
	\ln (\lambda ),\quad if\,\tau =0, \\ 
	\frac{\lambda ^{\tau }-1}{\tau },\quad \,if\,\tau \neq 0.%
	\end{cases}
	\label{htau}
	\end{equation}
\end{definition}

\begin{rem}\label{log}
	An example of a function that satisfies Definition~\ref{sr2} for $\tau = 0$ is $L(x) = \ln(x).$ Indeed, 
	\[ 
	\frac{L(\lambda x)}{L(x)} - 1 = \frac{\ln(\lambda) + \ln(x)}{\ln(x)} -1 = \ln(\lambda)\cdot\frac{1}{\ln(x)}.
	\]
\end{rem}

\begin{assumption}
	\label{ass1} Let $\eta (x),$ $x\in \mathbb{R}^{d}$, be a homogeneous
		isotropic Gaussian random field with $\mathbf{E}\eta (x)=0$ and a covariance
		function $B(x)$ such that
	\begin{equation*}
	B(0)=1,\quad B(x)=\mathbf{E}\eta (0) \eta (x)= \left\Vert x\right\Vert
	^{-\alpha }L_{0}(\left\Vert x\right\Vert ),
	\end{equation*}
	where $L_{0}(\left\Vert \cdot\right\Vert )$ is a function slowly varying at
	infinity.
\end{assumption}

\begin{assumption}
	\label{ass2} The random field $\eta(x),$ $x \in \mathbb{R}^d,$ has the
	spectral density
	\begin{equation}\label{f}
	f(\left\| \lambda \right\| )= c_2(d,\alpha )\left\| \lambda \right\|
	^{\alpha -d}L\left( \frac 1{\left\| \lambda \right\| }\right),
	\end{equation}
	where 
	\begin{equation*}
	c_2(d,\alpha ):=\frac{\Gamma \left( \frac{d-\alpha }2\right) }{2^\alpha \pi
		^{d/2}\Gamma \left( \frac \alpha 2\right) },
	\end{equation*}
	and $L(\left\Vert \cdot\right\Vert )$ is a locally bounded function which is
	slowly varying at infinity and satisfies for sufficiently large $r$ the
	condition 
	\begin{equation}  \label{gr}
	\left|1-\frac{L(tr)}{L(r)}\right|\le C\,g(r)h_{\tau}(t),\ t\ge 1,
	\end{equation}
	where $g(\cdot) \in R_{\tau} , \tau \le 0$, such that $g(x) \to 0, \ x \to
	\infty$, and $h_{\tau}(t)$ is defined by~\rm{(\ref{htau}).}
\end{assumption}

\begin{rem}\label{eql}
	By Tauberian and Abelian theorems, see \cite{leoole}, for $L_0(\cdot)$ and $L(\cdot)$ given in  Assumptions~\ref{ass1} and \ref{ass2} it holds  $L_0(r) \sim L(r),$ $r\to +\infty.$  
\end{rem}

\begin{rem}\cite{New}
	~\label{rem0} If $L$ satisfies {\rm{(\ref{gr})}}, then for any $%
	k\in \mathbb{N}$, $\delta >0$, and sufficiently large~$r$ 
	\begin{equation*}
	\left\vert 1-\frac{L^{k/2}(tr)}{L^{k/2}(r)}\right\vert \leq C\,g(r)h_{\tau
	}(t)t^{\delta },\ t\geq 1.
	\end{equation*}
\end{rem}

\begin{definition}
	Let $Y_1$ and $Y_2$ be arbitrary random variables. The uniform (Kolmogorov)
	metric for the distributions of $Y_1$ and $Y_2$ is defined by the formula 
	\begin{equation*}
	{\rho}\left( Y_1,Y_2\right) =\underset{z\in \mathbb{R}}{\sup }\left| P\left(
	Y_1\leq z\right) -P\left( Y_2\leq z\right) \right| .
	\end{equation*}
\end{definition}

The next result follows from Lemma~1.8~\cite{pet}.

\begin{lemma}
	\label{lem1} If $X,Y$ and $Z$ are arbitrary random variables, then for any $%
	\varepsilon >0$ 
	\begin{equation*}
	\rho \left( X+Y,Z\right) \leq {\rho }(X,Z)+\rho \left( Z+\varepsilon
	,Z\right) +P\left( \left\vert Y\right\vert \geq \varepsilon \right) .
	\end{equation*}
\end{lemma}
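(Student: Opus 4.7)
The plan is to reduce the triangle-type bound on $\rho(X+Y,Z)$ to a pointwise comparison of distribution functions at shifted arguments, so that the three terms on the right-hand side appear naturally. The argument does not require any structure on $X$, $Y$, $Z$ beyond measurability, and essentially just truncates the event $\{|Y|\ge \varepsilon\}$ away.

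First I would decompose for fixed $z\in\mathbb{R}$ using the event $\{|Y|<\varepsilon\}$:
\begin{equation*}
P(X+Y\le z) \;\le\; P(X+Y\le z,\,|Y|<\varepsilon) + P(|Y|\ge \varepsilon) \;\le\; P(X\le z+\varepsilon)+P(|Y|\ge \varepsilon),
\end{equation*}
because on $\{|Y|<\varepsilon\}$ the inequality $X+Y\le z$ forces $X\le z+\varepsilon$. A symmetric argument, noting that $\{X\le z-\varepsilon\}\cap\{|Y|<\varepsilon\}\subseteq\{X+Y\le z\}$, yields the matching lower bound
\begin{equation*}
P(X+Y\le z) \;\ge\; P(X\le z-\varepsilon) - P(|Y|\ge \varepsilon).
\end{equation*}

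Next I would subtract $P(Z\le z)$ from both sides and insert $\pm P(Z\le z\pm\varepsilon)$ to split each resulting difference into two pieces,
\begin{equation*}
[P(X\le z\pm\varepsilon)-P(Z\le z\pm\varepsilon)] \;+\; [P(Z\le z\pm\varepsilon)-P(Z\le z)].
\end{equation*}
The first bracket is bounded in absolute value by $\rho(X,Z)$ by the definition of the Kolmogorov metric. For the second bracket, I would use that
\begin{equation*}
\rho(Z+\varepsilon,Z)=\sup_{z\in\mathbb{R}}|P(Z\le z-\varepsilon)-P(Z\le z)|=\sup_{z\in\mathbb{R}}|P(Z\le z+\varepsilon)-P(Z\le z)|,
\end{equation*}
after a shift of variable, so both the $+\varepsilon$ and $-\varepsilon$ shifts are controlled by the same quantity $\rho(Z+\varepsilon,Z)$.

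Combining the upper and lower bounds gives
\begin{equation*}
|P(X+Y\le z)-P(Z\le z)|\le \rho(X,Z)+\rho(Z+\varepsilon,Z)+P(|Y|\ge\varepsilon),
\end{equation*}
and taking the supremum over $z$ produces the claimed inequality. There is no real obstacle here; the only point that requires a moment of care is keeping the direction of the $\varepsilon$-shift consistent when passing between $P(X+Y\le z)\le P(X\le z+\varepsilon)+\cdots$ and $P(X+Y\le z)\ge P(X\le z-\varepsilon)-\cdots$, which is why the middle term is stated in the symmetric form $\rho(Z+\varepsilon,Z)$ rather than as a one-sided modulus of continuity.
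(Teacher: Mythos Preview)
Your proof is correct. The paper does not actually prove this lemma but merely cites it as a consequence of Lemma~1.8 in Petrov's \emph{Limit Theorems of Probability Theory}; your argument is precisely the standard truncation-and-shift computation that underlies that result, so there is nothing substantively different to compare.
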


\section{Results on the asymptotic behavior}\label{abs}

In this section we are interested in the asymptotic distribution of the random variable $K_r =\int\limits_{\partial\Delta(r)}G(\eta (x))d\sigma(x).$ First, we prove Theorem~\ref{th4}, which is an analogue of the so called reduction theorem, see Theorem~4 in \cite{Souj}, in the case of hypersurface integrals. Using this result, in Theorem~\ref{th5} we derive normalizing coefficients and limit distributions of the random variable $K_r$ that depend on the Hermite rank $\kappa$ of the function $G(\cdot)$. 

\begin{theorem}\label{th4}
Suppose that $H\mathrm{rank}\,G=\kappa \in \mathbb{N}$ and $\eta (x),$ $x\in \mathbb{R}^{d},$
satisfies Assumption~{\rm\ref{ass1}} for $\alpha\in(0, (d-1)/\kappa)$. If at least one of the following random variables
\begin{equation*}
\frac{K_{r}}{\sqrt{\mathbf{Var}\text{ }K_{r}}},\quad \frac{K_{r}}{\sqrt{%
		\mathbf{Var}\ K_{r,\kappa }}}\quad \mbox{and}\quad \frac{K_{r,\kappa }}{%
	\sqrt{\mathbf{Var}\ K_{r,\kappa }}},
\end{equation*}%
has a limit distribution, then the limit distributions of the other random variables also exist and
they coincide when $r\rightarrow \infty .$
\end{theorem}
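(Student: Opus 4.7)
The plan is to reduce $K_r$ to its leading Hermite component $K_{r,\kappa}$ and show the remainder is negligible at the scale $\sqrt{\mathbf{Var}\,K_{r,\kappa}}$. Writing the Hermite expansion $K_r = K_{r,\kappa} + R_r$ with $R_r = \sum_{j\ge \kappa+1}\frac{C_j}{j!}\int_{\partial\Delta(r)} H_j(\eta(x))\,d\sigma(x)$, Remark~\ref{rem1} implies that integrals of Hermite polynomials of different orders are uncorrelated, so $\mathbf{Var}\,K_r = \mathbf{Var}\,K_{r,\kappa} + \mathbf{Var}\,R_r$. Once I prove $\mathbf{Var}\,R_r / \mathbf{Var}\,K_{r,\kappa} \to 0$, Chebyshev gives $R_r/\sqrt{\mathbf{Var}\,K_{r,\kappa}} \to 0$ in probability, Lemma~\ref{lem1} (applied with $X=K_{r,\kappa}/\sqrt{\mathbf{Var}\,K_{r,\kappa}}$, $Y=R_r/\sqrt{\mathbf{Var}\,K_{r,\kappa}}$, and $Z$ the supposed limit) shows the distributions of $K_r/\sqrt{\mathbf{Var}\,K_{r,\kappa}}$ and $K_{r,\kappa}/\sqrt{\mathbf{Var}\,K_{r,\kappa}}$ have coinciding Kolmogorov limits, and the additional fact $\mathbf{Var}\,K_r\sim\mathbf{Var}\,K_{r,\kappa}$ together with Slutsky covers the third normalisation. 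The entire proof thus reduces to a single variance estimate.

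For the lower bound, I would apply (\ref{varg}) together with the change of variables $x=r\tilde x$, $y=r\tilde y$ on $\partial\Delta(r)$, which introduces the factor $r^{2d-2}$, and obtain
\begin{equation*}
\mathbf{Var}\,K_{r,\kappa} = \frac{C_\kappa^2}{\kappa!}\,r^{2d-2}\int_{\partial\Delta}\int_{\partial\Delta} B^\kappa(r\|\tilde x-\tilde y\|)\,d\sigma(\tilde x)\,d\sigma(\tilde y).
\end{equation*}
Assumption~\ref{ass1} yields the pointwise limit $B^\kappa(r\rho) \sim r^{-\alpha\kappa}\rho^{-\alpha\kappa}L_0^\kappa(r)$, and a Potter-type bound combined with the Ahlfors-David estimate (\ref{adr}) provides an integrable majorant precisely because $\alpha\kappa<d-1$ makes $\int\int_{\partial\Delta}\|\tilde x-\tilde y\|^{-\alpha\kappa}d\sigma(\tilde x)d\sigma(\tilde y)$ finite. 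Dominated convergence then gives $\mathbf{Var}\,K_{r,\kappa}\sim c\,r^{2d-2-\alpha\kappa}L_0^\kappa(r)$ for some $c>0$.

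For the upper bound on $\mathbf{Var}\,R_r$, the inequalities $|B(\rho)|\le 1$ and Parseval give the pointwise estimate $\sum_{j\ge \kappa+1}\frac{C_j^2}{j!}|B(\rho)|^j \le C|B(\rho)|^{\kappa+1}$, whence $\mathbf{Var}\,R_r\le C\int\int_{\partial\Delta(r)}|B(\|x-y\|)|^{\kappa+1}d\sigma\,d\sigma$. I would split this integral at $\|x-y\|=R$: the near part is controlled by (\ref{adr}), which bounds $\sigma(\partial\Delta(r)\cap B(x,R))\le CR^{d-1}$, yielding a contribution of at most $CR^{d-1}r^{d-1}$. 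For the far part I would use $|B(\rho)|^{\kappa+1}\le \rho^{-\alpha(\kappa+1)}L_0^{\kappa+1}(\rho)$ and a dyadic decomposition of $\|x-y\|$, where (\ref{adr}) again provides $\sigma\{y : \|x-y\|\in[2^kR,2^{k+1}R]\}\le C(2^kR)^{d-1}$. Summing the geometric-type series yields a bound $r^{\max(d-1,\,2d-2-\alpha(\kappa+1))}L_0^{\kappa+1}(r)$ (with a $\log r$ factor in the boundary case $\alpha(\kappa+1)=d-1$); since $\alpha\kappa<d-1$, in every regime this is of strictly smaller order than $r^{2d-2-\alpha\kappa}L_0^\kappa(r)$, which is what is required.

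The main obstacle is the last comparison. Unlike in the solid-figure case, the measure of $\{(x,y)\in\partial\Delta(r)^2:\|x-y\|\in[s,2s]\}$ on a $(d-1)$-dimensional hypersurface grows only like $s^{d-1}r^{d-1}$ rather than $s^{d-1}r^d$, which tightens the available power budget in $r$ by one. This is exactly why the hypothesis must drop from $\alpha\kappa<d$ (as in the solid case) to $\alpha\kappa<d-1$, and Ahlfors-David regularity (\ref{adr}) is the precise geometric ingredient that makes both the dyadic upper bounds on $\mathbf{Var}\,R_r$ and the finiteness of the limiting integral in $\mathbf{Var}\,K_{r,\kappa}$ work simultaneously.
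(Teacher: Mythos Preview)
Your proposal is correct and follows essentially the same route as the paper: decompose $K_r=K_{r,\kappa}+V_r$ via the Hermite expansion, use Remark~\ref{rem1} for orthogonality, and reduce everything to the variance comparison $\mathbf{Var}\,V_r=o(\mathbf{Var}\,K_{r,\kappa})$, with Ahlfors--David regularity~(\ref{adr}) supplying the key geometric bound on the near-diagonal contribution. The execution differs only in packaging: the paper works through the distance density $\psi_{\Delta}$ from~(\ref{dint}) and invokes Seneta's theorem for the asymptotics of $\mathbf{Var}\,K_{r,\kappa}$, then splits the $V_r$ integral at $z=r^{-\beta}$ and finishes by showing the $L^2$ difference of the normalized variables tends to~$0$; you instead change variables directly, appeal to Potter bounds plus dominated convergence for the leading term, split at a scale $R$ with a dyadic sum on the far part, and close with Slutsky/Lemma~\ref{lem1} --- all equivalent moves leading to the same estimate.
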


\begin{proof} Let
	\[
	V_r:=\sum_{j\geq \kappa+1}\frac{C_j}{j !}
	\int\limits_{\partial\Delta(r)}H_j (\eta (x))d\sigma(x),
	\]
	then by Remark~\ref{rem1}
	\[
	\mathbf{Var}\, K_r= \mathbf{Var}\, K_{r,\kappa}+ \mathbf{Var}\, V_r.
	\]
	By (\ref{dint}) and (\ref{varg})
	\begin{eqnarray*}
		\mathbf{Var} \, K_{r,\kappa}&=& \frac{C_\kappa ^2}{\kappa !}
		\int\limits_{\partial\Delta(r)}\int\limits_{\partial\Delta(r)}\left\|
		x-y\right\|^{-\alpha\kappa} L_0^\kappa\left(\left\| x-y\right\| \right)\, \mathrm{d}\sigma(x)\, \mathrm{d}\sigma(y) \\
		&=&|\partial\Delta|^2 r^{2d-2-\alpha\kappa}\frac{C_\kappa ^2}{\kappa !}
		\int\limits_0^{diam\left\{ \Delta
			\right\}} z^{-\alpha\kappa} L_0^\kappa\left(rz\right)\psi _{\Delta}(z)dz.\end{eqnarray*}
	
	If $\alpha \in (0,(d-1)/\kappa)$ then by asymptotic properties of integrals of slowly varying functions (see Theorem~2.7 \cite{sen}) we get
	
	\begin{eqnarray*}
		\mathbf{Var} \, K_{r,\kappa}&=&c_1(\kappa,\alpha,\Delta)\, |\partial\Delta|^2 \frac{C_\kappa ^2}{\kappa !}\,
		r^{2d-2-\kappa\alpha}L_0^\kappa(r)(1+o(1)),
		\quad r\to \infty,
	\end{eqnarray*}
	where
	\[c_1(\kappa,\alpha,\Delta):=\int\limits_0^{diam\left\{ \Delta
		\right\}} z^{-\alpha\kappa}\psi _{\Delta }(z)dz.\]

	Similar to $\mathbf{Var} \ K_{r,\kappa}$ we obtain
	\[\mathbf{Var}\, V_r = |\partial\Delta|^2r^{2d-2}\sum_{j\geq \kappa+1}\frac{C_j ^2}{j!}
	\int\limits_0^{r\cdot diam\left\{ \Delta
		\right\}} z^{-\alpha j} L_0^j\left(z\right)\psi _{\Delta (r)}(z)dz.
	\]
	It follows from  $z^{-\alpha} L\left(z\right)\in[0,1],$ $z\ge 0,$  that
	\begin{eqnarray*}
		\mathbf{Var}\, V_r &\leq& |\partial\Delta|^2r^{2d-2-(\kappa+1)\alpha}\sum_{j\geq \kappa+1}\frac{C_j ^2}{j !}
		\int\limits_0^{diam\left\{ \Delta
			\right\}} z^{-\alpha(\kappa+1)} L_0^{\kappa+1}\left(rz\right)\psi _{\Delta}(z)dz\\
		&=&  |\partial\Delta|^2r^{2d-2-\kappa\alpha}L_0^{\kappa}(r)\sum_{j\geq \kappa+1}\frac{C_j ^2}{j !}
		\int\limits_0^{diam\left\{ \Delta
			\right\}} z^{-\alpha\kappa} \frac{L_0^{\kappa}\left(rz\right)}{L_0^{\kappa}(r)}  \frac{L_0\left(rz\right)}{(rz)^{\alpha}}\psi _{\Delta}(z)dz.  \end{eqnarray*}
	
	Let us split the above integral into two parts $I_1$ and $I_2$ with the ranges of integration $[0,r^{-\beta}]$ and $(r^{-\beta},diam\left\{ \Delta
	\right\}]$ respectively, where $\beta\in(0,1).$
	
	As $z^{-\alpha} L_0\left(z\right)\in[0,1],$ $z\ge 0,$ we can estimate the first integral as follows
	\[I_1\le\int\limits_0^{r^{-\beta}} z^{-\alpha\kappa} \frac{L_0^{\kappa}\left(rz\right)}{L_0^{\kappa}(r)}  \psi _{\Delta}(z)dz\le
	\frac{\sup_{0\le s\le r^{1-\beta}}s^{\delta}L_0^{\kappa}\left(s\right)}{r^{\delta}L_0^{\kappa}(r)} \int\limits_0^{r^{-\beta}}z^{-(\delta+\alpha\kappa)}  \psi _{\Delta}(z)dz\]
	\begin{equation}\label{int1}
	\le\left(\frac{\sup_{0\le s\le r}s^{\delta/k}L_0\left(s\right)}{r^{\delta/k}L_0(r)}\right)^{\kappa} \int\limits_0^{r^{-\beta}} z^{-(\delta+\alpha\kappa)}  \psi _{\Delta}(z)dz.
	\end{equation}

	By Theorem~1.5.3 \cite{bin} and the definition of slowly varying functions
	\[\lim_{r\to\infty}\frac{\sup_{0\le s\le r}s^{\delta/k}L_0\left(s\right)}{r^{\delta/k}L_0(r)}=1.\]
	
	By (\ref{dint}) we can rewrite the integral in (\ref{int1}) as follows
	\[\int\limits_0^{r^{-\beta}} z^{-(\delta+\alpha\kappa)}  \psi _{\Delta}(z)dz=\left| \partial\Delta \right| ^{-2}\int\limits_{\partial\Delta}\int\limits_{\partial\Delta}\chi(\left\| x-y\right\|\le r^{-\beta})\left\| x-y\right\|^{-(\delta+\alpha\kappa)}\,\mathrm{d}\sigma(x)\,\mathrm{d}\sigma(y)\]
	\[
	\leq \left| \partial\Delta \right| ^{-2}\int\limits_{\partial\Delta}\max\limits_{y}\left(\int\limits_{\partial\Delta}\chi(\left\| x-y\right\|\le r^{-\beta})\left\| x-y\right\|^{-(\delta+\alpha\kappa)}\,\mathrm{d}\sigma(x)\right)\mathrm{d}\sigma(y)
	\]
	\[
	= \left| \partial\Delta \right| ^{-1}\max\limits_{y}\left(\int\limits_{\partial\Delta}\chi(\left\| x-y\right\|\le r^{-\beta})\left\| x-y\right\|^{-(\delta+\alpha\kappa)}\,\mathrm{d}\sigma(x)\right)
	\]
	\[
	= \left| \partial\Delta \right| ^{-1}\max\limits_{y}\left(\int\limits_{\partial\Delta\cap B(y, r^{-\beta})}\left\| x-y\right\|^{-(\delta+\alpha\kappa)}\,\mathrm{d}\sigma(x)\right).
	\]
	
	Since $\partial\Delta$ is Ahlfors-David regular, applying upper-bound from (\ref{adr}) we get
	\[\int\limits_{\partial\Delta\cap B(y, r^{-\beta})}\left\| x-y\right\|^{-(\delta+\alpha\kappa)}\,\mathrm{d}\sigma(x) = \sum\limits_{i=0}^{\infty}\int\limits_{\partial\Delta\cap\left[ B(y, r^{-\beta}2^{-i})\backslash B(y, r^{-\beta}2^{-i-1}\right]}\left\| x-y\right\|^{-(\delta+\alpha\kappa)}\,\mathrm{d}\sigma(x)
	\]
	\[
	\leq \sum\limits_{i=0}^{\infty}\int\limits_{\partial\Delta\cap\left[ B(y, r^{-\beta}2^{-i})\backslash B(y, r^{-\beta}2^{-i-1}\right]}r^{\beta(\delta+\alpha\kappa)}2^{(i+1)(\delta+\alpha\kappa)}\,\mathrm{d}\sigma(x) \leq 
	\sum\limits_{i=0}^{\infty}r^{\beta(\delta+\alpha\kappa)}2^{(i+1)(\delta+\alpha\kappa)}\]
	\[\times\int\limits_{\partial\Delta\cap B(y, r^{-\beta}2^{-i})}\mathrm{d}\sigma(x) \leq Cr^{\beta(\delta+\alpha\kappa)}\sum\limits_{i=0}^{\infty}2^{(i+1)(\delta+\alpha\kappa)}r^{-\beta(d-1)}2^{-i(d-1)}
	\]
	\[
	= \frac{C2^{\delta + \alpha\kappa}}{1 - 2^{-(d - (1 +\delta+\alpha\kappa))}}r^{-\beta(d - (1 +\delta+\alpha\kappa))}.
	\]
	Thus, we have
	\begin{equation}\label{I11}
	\int\limits_0^{r^{-\beta}} z^{-(\delta+\alpha\kappa)}  \psi _{\Delta}(z)dz \leq Cr^{-\beta(d-(1+\delta+\alpha\kappa))}.
	\end{equation}
	
	For the second integral we obtain
	\[I_2\le
	\frac{\sup_{r^{1-\beta}\le s\le r\cdot diam\left\{ \Delta
			\right\}}s^{\delta}L_0^{\kappa}\left(s\right)}{r^{\delta}L_0^{\kappa}(r)}\cdot \sup_{r^{1-\beta}\le s\le r\cdot diam\left\{ \Delta
		\right\}}\frac{L_0\left(s\right)}{s^\alpha}\cdot \int\limits_0^{diam\left\{ \Delta \right\}} z^{-(\delta+\alpha\kappa)}  \psi _{\Delta}(z)dz.\]
	
	Using Theorem~1.5.3 \cite{bin} we conclude that
	\[\lim_{r\to\infty}\frac{\sup_{r^{1-\beta}\le s\le r\cdot diam\left\{ \Delta
			\right\}}s^{\delta}L_0^{\kappa}\left(s\right)}{r^{\delta}L_0^{\kappa}(r)}\le\lim_{r\to\infty}\frac{\sup_{0\le s\le r\cdot diam\left\{ \Delta
			\right\}}s^{\delta}L_0^{\kappa}\left(s\right)}{(r\cdot diam\left\{ \Delta
		\right\})^{\delta}L_0^{\kappa}(r\cdot diam\left\{ \Delta \right\})}\]
	\[\times\lim_{r\to\infty}\frac{diam^{\delta}\left\{ \Delta
		\right\}L_0^{\kappa}(r\cdot diam\left\{ \Delta \right\})}{L_0^{\kappa}(r)}=diam^{\delta}\left\{ \Delta \right\}.\]
	
	By Proposition~1.3.6 and Theorem~1.5.3 \cite{bin} it follows that
	\[\sup\limits_{r^{1-\beta}\le s\le r\cdot diam\left\{ \Delta\right\}}\frac{L_0\left(s\right)}{s^\alpha}\le \frac{\sup_{s\ge r^{1-\beta}}s^{-\alpha}L_0\left(s\right)}{r^{-\alpha(1-\beta)}L_0\left(r^{1-\beta}\right)}\cdot\frac{L_0\left(r^{1-\beta}\right)}{r^{\delta(1-\beta)}} \cdot r^{(\delta-\alpha)(1-\beta)}\]
	\begin{equation}\label{I21}
	=o(r^{(\delta-\alpha)(1-\beta)}).
	\end{equation}
	We can choose $\beta=1/2$ and make $\delta$ arbitrary close to 0. Then by (\ref{I11}), (\ref{I21}) we obtain
	\[
	\lim_{r\to \infty }\frac{\mathbf{Var}\, V_r}{\mathbf{Var}\, K_{r}}=0\quad \mbox{and}\quad
	\lim_{r\to \infty }\frac{\mathbf{Var}\, K_{r}}{\mathbf{Var}\, K_{r,\kappa}} = 1.
	\]
	
	Thus
	\[
	\lim_{r\to \infty }\, \mathbf{E}\left(\frac{K_{r}}{\sqrt{ \mathbf{Var} \ K_{r}}}-\frac{K_{r,\kappa}}{\sqrt{ \mathbf{Var} \ K_{r,\kappa}}}\right)^2=\lim_{r\to \infty }\frac{\mathbf{E}\left(V_r+\left(1-\sqrt{\frac{\mathbf{Var} K_{r}}{\mathbf{Var} K_{r,\kappa}}}\right)K_{r,\kappa}\right)^2}{\mathbf{Var} K_{r}} =0,\]
	and
	\[
	\lim_{r\to \infty }\, \mathbf{E}\left(\frac{K_{r}}{\sqrt{ \mathbf{Var} \ K_{r,\kappa}}}-\frac{K_{r,\kappa}}{\sqrt{ \mathbf{Var} \ K_{r,\kappa}}}\right)^2=\lim_{r\to \infty }\frac{\mathbf{E}\left(V_r\sqrt{\frac{\mathbf{Var} K_{r}}{\mathbf{Var} K_{r,\kappa}}}\right)^2}{\mathbf{Var} K_{r}} =0\]
	which completes the proof.
\end{proof}

\begin{lemma}\label{finint}
If $\tau_1,...,\tau_\kappa,$ $\kappa\ge 1,$ are such positive constants, that $\sum_{i=1}^\kappa \tau_i <d-1,$  then
	\begin{equation}\label{finv}
	\int\limits_{\mathbb{R}^{d\kappa}}|\mathcal{K}(\lambda _1+\cdots
	+\lambda _\kappa)|^2 \frac{\mathrm{d}\lambda _1\ldots \mathrm{d}\lambda _\kappa}{\left\| \lambda
		_1\right\| ^{d-\tau_1}\cdots \left\| \lambda _\kappa\right\| ^{d-\tau_\kappa}}<\infty .
	\end{equation}
\end{lemma}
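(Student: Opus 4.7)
The plan is to collapse the $\kappa$-fold integral over $\mathbb{R}^{d\kappa}$ into a single integral over $\mathbb{R}^d$ by absorbing the Riesz-type weights into a convolution, and then to exploit the spherical $L^2$-decay bound (\ref{l2a}) in polar coordinates. Write $\tau := \sum_{i=1}^{\kappa} \tau_i$, so that by hypothesis $0 < \tau < d-1$.

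First I would set $u = \lambda_1 + \cdots + \lambda_\kappa$ as a new variable in place of $\lambda_1$, keep $\lambda_2, \ldots, \lambda_\kappa$ as free variables, and use Tonelli's theorem (the integrand is non-negative) to rewrite the integral in (\ref{finv}) as
\begin{equation*}
\int_{\mathbb{R}^d} |\mathcal{K}(u)|^2 \, \bigl(\Phi_1 * \cdots * \Phi_\kappa\bigr)(u) \, \mathrm{d}u, \qquad \Phi_i(\lambda) := \|\lambda\|^{-d+\tau_i}.
\end{equation*}
Next, I would apply the Riesz composition formula $\|\cdot\|^{-d+a} * \|\cdot\|^{-d+b} = c(a,b)\,\|\cdot\|^{-d+a+b}$ (valid when $a,b > 0$ and $a+b < d$) iteratively. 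Each intermediate partial sum $\tau_1 + \cdots + \tau_j$ lies strictly below $\tau < d-1 < d$, so the hypothesis is satisfied at every stage. This produces $(\Phi_1 * \cdots * \Phi_\kappa)(u) = C\,\|u\|^{-d+\tau}$, reducing the claim to the single-variable estimate
\begin{equation*}
\int_{\mathbb{R}^d} |\mathcal{K}(u)|^2 \, \|u\|^{-d+\tau} \, \mathrm{d}u < \infty.
\end{equation*}

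Finally, switching to polar coordinates $u = r\omega$ with $r>0$ and $\omega \in S_{d-1}(1)$ yields
\begin{equation*}
\int_0^\infty r^{-1+\tau}\left( \int_{S_{d-1}(1)} |\mathcal{K}(r\omega)|^2\, \mathrm{d}\omega \right) \mathrm{d}r,
\end{equation*}
which I would split at $r=1$. The piece over $(0,1]$ is finite because $|\mathcal{K}(\cdot)|$ is uniformly bounded by the surface area of the bounded hypersurface $\partial\Delta$, while $\tau > 0$ makes $\int_0^1 r^{-1+\tau}\,\mathrm{d}r$ converge. The tail over $[1,\infty)$ is controlled by (\ref{l2a}), giving the bound $C\int_1^\infty r^{-d+\tau}\,\mathrm{d}r$, which converges precisely under the hypothesis $\tau < d-1$.

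The main obstacle is the convolution-identity step: one must invoke the Riesz composition formula with the correct constants and perform a brief induction to verify that the intermediate partial sums remain in the admissible range $(0,d)$. Once the multiple integral is collapsed, the remaining polar-coordinate estimate is routine and is sharp at both ends, since $\tau > 0$ controls the integrand near the origin while $\tau < d-1$ is exactly what is needed at infinity via (\ref{l2a}).
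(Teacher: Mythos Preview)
Your proposal is correct and follows essentially the same route as the paper: the paper likewise reduces the $\kappa$-fold integral to the single-variable estimate $\int_{\mathbb{R}^d}|\mathcal{K}(u)|^2\|u\|^{-d+\tau}\,\mathrm{d}u$ and then splits that in polar coordinates, using $|\mathcal{K}|\le|\partial\Delta|$ near the origin and the decay bound~(\ref{l2a}) at infinity. The only cosmetic difference is that the paper performs the Riesz convolution step by hand---substituting $u=\lambda_{\kappa-1}+\lambda_\kappa$, rescaling $\tilde\lambda_{\kappa-1}=\lambda_{\kappa-1}/\|u\|$, and bounding the resulting inner integral by a constant---rather than invoking the Riesz composition formula as a named identity; the inductive check on the partial sums $\tau_1+\cdots+\tau_j<d$ is implicit in both.
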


\begin{proof}
	For $\kappa= 1$ we get $d-\tau_1>1$. Using integration formula for polar coordinates, and the fact that $|\mathcal{K}(\lambda)| 
	\leq |\partial\Delta|$ for all $\lambda \in \mathbb{R}^d$ we get	
	\[
	\int\limits_{\mathbb{R}^{d}}|\mathcal{K}(\lambda)|^2  \frac{\mathrm{d}\lambda}{\left\| \lambda\right\| ^{d-\tau_1}}= \int\limits_{0}^{\infty}r^{d-1}\int\limits_{S_{d-1}(1)}\frac{|\mathcal{K}(\omega r)|^2}{r^{d-\tau_1}}\,\mathrm{d}\omega\mathrm{d}r = \int\limits_{0}^{r_0}r^{d-1}\int\limits_{S_{d-1}(1)}\frac{|\mathcal{K}(\omega r)|^2}{r^{d-\tau_1}}\,\mathrm{d}\omega\mathrm{d}r \]
	\[+ \int\limits_{r_0}^{\infty}r^{d-1}\int\limits_{S_{d-1}(1)}\frac{|\mathcal{K}(\omega r)|^2}{r^{d-\tau_1}}\,\mathrm{d}\omega\mathrm{d}r
	\leq |\partial\Delta|^2\int\limits_{0}^{r_0}\frac{r^{d-1}\mathrm{d}r}{r^{d-\tau_1}} + \int\limits_{r_0}^{\infty}r^{d-1}\int\limits_{S_{d-1}(1)}\frac{|\mathcal{K}(\omega r)|^2}{r^{d-\tau_1}}\,\mathrm{d}\omega\mathrm{d}r.\]
	By (\ref{l2a}) we obtain
	\[
	\int\limits_{\mathbb{R}^{d}}|\mathcal{K}(\lambda)|^2  \frac{\mathrm{d}\lambda}{\left\| \lambda\right\| ^{d-\tau_1}} \leq |\partial\Delta|^2\int\limits_{0}^{r_0}\frac{\mathrm{d}r}{r^{1-\tau_1}} +C\int\limits_{r_0}^{\infty}\frac{r^{-d+1}}{r^{1-\tau_1}}\mathrm{d}r
	= |\partial\Delta|^2\int\limits_{0}^{r_0}\frac{\mathrm{d}r}{r^{1-\tau_1}} +C\int\limits_{r_0}^{\infty}\frac{\mathrm{d}r}{r^{d-\tau_1}} <\infty.
	\]
	
	For $\kappa> 1$ we can obtain (\ref{finv}) by the recursive estimation routine and the change of variables $\tilde{\lambda}_{\kappa-1}={\lambda_{\kappa-1}}/{\left\| u\right\|}:$	
	\[
	\int\limits_{\mathbb{R}^{d\kappa}}  \frac{|\mathcal{K}(\lambda _1+\cdots
		+\lambda _{\kappa})|^2\mathrm{d}\lambda _1\ldots \mathrm{d}\lambda _\kappa}{\left\| \lambda
		_1\right\| ^{d-\tau_1}\cdots \left\| \lambda _\kappa\right\| ^{d-\tau_\kappa}}=|u=\lambda_{\kappa-1}+\lambda_\kappa|=
	\int\limits_{\mathbb{R}^{d(\kappa-1)}}|\mathcal{K}(\lambda _1+\cdots
	+\lambda _{\kappa-2}+u)|^2 \]
	\[\times \int\limits_{\mathbb{R}^{d}} \frac{\mathrm{d}\lambda _{\kappa-1}}{\left\| \lambda _{\kappa-1}\right\| ^{d-\tau_{\kappa-1} }\left\|u- \lambda
		_{\kappa-1}\right\| ^{d-\tau_{\kappa}}}\cdot \frac{\mathrm{d}\lambda _1\ldots \mathrm{d}\lambda _{\kappa-2}\, \mathrm{d}u}{\left\| \lambda
		_1\right\| ^{d-\tau_{1} }\cdots \left\| \lambda _{\kappa-2}\right\| ^{d-\tau_{\kappa-2} }}
	\]
	\[=
	\int\limits_{\mathbb{R}^{d(\kappa-1)}}\frac{|\mathcal{K}(\lambda _1+\cdots
		+\lambda _{\kappa-2}+u)|^2  \mathrm{d}\lambda _1\ldots \mathrm{d}\lambda _{\kappa-2}}{\left\| \lambda
		_1\right\| ^{d-\tau_{1} }\cdots \left\| \lambda _{\kappa-2}\right\|^{d-\tau_{\kappa-2} }\left\| u\right\|^{d-\tau_{\kappa-1}-\tau_{\kappa} }} \int\limits_{\mathbb{R}^{d}} \frac{\mathrm{d}\tilde{\lambda}_{\kappa-1}\mathrm{d}u}{\left\|\tilde{\lambda}_{\kappa-1}\right\| ^{d-\tau_{\kappa-1} }\left\|\frac{u}{\left\| u\right\|}- \tilde{\lambda}
		_{\kappa-1}\right\| ^{d-\tau_{\kappa}}}\, \]
	\[\le C \int\limits_{\mathbb{R}^{d(\kappa-1)}}|\mathcal{K}(\lambda _1+\cdots
	+\lambda _{\kappa-2}+u)|^2   \frac{\mathrm{d}\lambda _1\ldots \mathrm{d}\lambda _{\kappa-2}\, \mathrm{d}u}{\left\| \lambda
		_1\right\| ^{d-\tau_{1} }\cdots \left\| \lambda _{\kappa-2}\right\|^{d-\tau_{\kappa-2} }\left\| u\right\|^{d-\tau_{\kappa-1}-\tau_{\kappa} }}\]
	\[\le ...\le C \int\limits_{\mathbb{R}^{d}}|\mathcal{K}(u)|^2   \frac{\mathrm{d}u}{\left\| u\right\|^{d-\sum_{i=1}^\kappa \tau_i}}<\infty.\qedhere\]
\end{proof}

\begin{theorem}\label{th5} Let $\eta(x),$ $x\in \mathbb{R}^d,$ be a homogeneous isotropic Gaussian random
field with $\mathbf{E}\eta(x)=0.$  If Assumptions~{\rm{\ref{ass1}}} and {\rm{\ref{ass2}}} hold, $\alpha \in (0,(d-1)/\kappa),$ and $H\mathrm{rank}\,G=\kappa\in\mathbb{N},$  then for $r\to \infty$ the random variable
\[X_{\kappa,r}(\Delta):=r^{(\kappa\alpha )/2-d+1}L^{-\kappa/2}(r)\int\limits_{\partial\Delta(r)}H_\kappa(\eta (x))\,\mathrm{d}\sigma(x)\]
converge weakly to
\begin{equation}\label{Xk}
X_\kappa(\Delta) :=c_2^{\kappa/2}(d,\alpha )  {\int\limits_{\mathbb{R}^{d\kappa}}}^{\prime}\mathcal{K}(\lambda _1+\cdots
+\lambda _\kappa) \frac{W(\mathrm{d}\lambda _1)\ldots W(\mathrm{d}\lambda _\kappa)}{\left\| \lambda
_1\right\| ^{(d-\alpha )/2}\cdots \left\| \lambda _\kappa\right\| ^{(d-\alpha )/2}},\end{equation}
where ${\int\limits_{\mathbb{R}^{d\kappa}}}^{\hspace{-0.03in}\prime}$ denotes the multiple stochastic Wiener-It\^{o} integral.
\end{theorem}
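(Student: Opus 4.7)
The plan is to pass to the spectral domain, rescale the spectral variables so that the integral kernel coincides, up to slowly varying corrections, with the one appearing in~(\ref{Xk}), and then invoke the $L^2(\Omega)$-isometry for multiple Wiener-It\^{o} integrals to reduce weak convergence of $X_{\kappa,r}(\Delta)$ to $L^2(\mathbb{R}^{d\kappa})$-convergence of the deterministic kernels. By Theorem~\ref{th4} one may work directly with the Hermite-rank-$\kappa$ component, and the It\^{o} formula for Hermite polynomials applied to the isonormal spectral representation of $\eta$ yields
\begin{equation*}
H_\kappa(\eta(x)) = {\int\limits_{\mathbb{R}^{d\kappa}}}^{\prime} e^{i<\lambda_1+\cdots+\lambda_\kappa,\,x>}\prod_{j=1}^{\kappa}\sqrt{f(\left\|\lambda_j\right\|)}\,W(\mathrm{d}\lambda_j).
\end{equation*}

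Integrating this identity over $\partial\Delta(r)$ and applying a stochastic Fubini produces the factor $\mathcal{K}_r(\lambda_1+\cdots+\lambda_\kappa)$ with $\mathcal{K}_r(x)=r^{d-1}\mathcal{K}(rx)$. Substituting $\mu_j=r\lambda_j$, using the scaling $W(\mathrm{d}\mu/r)\stackrel{\mathrm{d}}{=}r^{-d/2}W(\mathrm{d}\mu)$ of the Gaussian white noise, and inserting the explicit form $f(\|\mu\|/r)=c_2(d,\alpha)\,r^{d-\alpha}\|\mu\|^{\alpha-d}L(r/\|\mu\|)$ from Assumption~\ref{ass2}, the accumulated power of $r$ is exactly $r^{d-1-\kappa\alpha/2}$, which is killed by the prefactor $r^{(\kappa\alpha)/2-d+1}L^{-\kappa/2}(r)$. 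One is then left with the identity in distribution
\begin{equation*}
X_{\kappa,r}(\Delta)\stackrel{\mathrm{d}}{=}c_2^{\kappa/2}(d,\alpha){\int\limits_{\mathbb{R}^{d\kappa}}}^{\prime}\mathcal{K}(\mu_1+\cdots+\mu_\kappa)\prod_{j=1}^{\kappa}\frac{L^{1/2}(r/\|\mu_j\|)}{L^{1/2}(r)\left\|\mu_j\right\|^{(d-\alpha)/2}}\,W(\mathrm{d}\mu_j).
\end{equation*}

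Denote the kernel above by $Q_r$ and the kernel of $X_\kappa(\Delta)$ by $Q$. Membership $Q\in L^2(\mathbb{R}^{d\kappa})$ follows from Lemma~\ref{finint} with $\tau_1=\cdots=\tau_\kappa=\alpha$, whose hypothesis $\kappa\alpha<d-1$ is precisely the standing assumption on $\alpha$. Pointwise convergence $Q_r\to Q$ almost everywhere is immediate from slow variation of $L$, so the remaining and decisive step is dominated convergence. I would split each $\mu_j$-axis at $\|\mu_j\|=1$: on $\|\mu_j\|\le1$ the ratio $L(r/\|\mu_j\|)/L(r)$ is controlled by~(\ref{gr}) and Remark~\ref{rem0}, giving $L^{\kappa/2}(r/\|\mu_j\|)/L^{\kappa/2}(r)\le C\|\mu_j\|^{-\delta}$ for arbitrarily small $\delta>0$ and sufficiently large $r$; on $\|\mu_j\|>1$ a standard Potter bound from \cite{bin} yields the symmetric estimate $\le C\|\mu_j\|^{\delta}$. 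A second application of Lemma~\ref{finint} on each of the $2^\kappa$ sub-products of these two regimes, with perturbed exponents $\tau_j=\alpha\pm\delta$, shows that the majorant remains integrable provided $\delta$ is chosen below the strictly positive gap $(d-1)/\kappa-\alpha$. The Wiener-It\^o isometry then upgrades $\|Q_r-Q\|_{L^2(\mathbb{R}^{d\kappa})}\to 0$ to $L^2(\Omega)$-convergence, hence to convergence in distribution. The main obstacle is this uniform control of the slowly varying ratios simultaneously near $\|\mu_j\|=0$ and $\|\mu_j\|=\infty$ together with the only-on-average decay~(\ref{l2a}) of the oscillatory factor $\mathcal{K}$; every other step is bookkeeping once the spectral representation is in place.
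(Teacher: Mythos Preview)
Your proposal is correct and mirrors the paper's proof: It\^o formula plus stochastic Fubini, rescaling to extract the factor $r^{d-1-\kappa\alpha/2}$, then dominated convergence on the $2^\kappa$ orthants $\{\|\mu_j\|\lessgtr 1\}$ with the majorant handled by Lemma~\ref{finint}. Two minor points: the paper controls the ratio $L(r/\|\mu_j\|)/L(r)$ in \emph{both} regimes using only Theorem~1.5.3 of \cite{bin} (pure Potter bounds from slow variation) rather than the SR2 condition~(\ref{gr}) or Remark~\ref{rem0}, and explicitly remarks that only~(\ref{f}) from Assumption~\ref{ass2} is actually needed for this theorem; and for Lemma~\ref{finint} to apply with $\tau_j=\alpha-\delta$ you must also ensure $\delta<\alpha$, so take $\delta<\min\bigl(\alpha,(d-1)/\kappa-\alpha\bigr)$.
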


\begin{rem}
	Note, that from the following proof it is clear that it is sufficient to use only (\ref{f}) instead of Assumption~\ref{ass2}.
\end{rem}

\begin{proof}
	Using It\'{o} formula (2.3.1) in \cite{LeoLT} we obtain
	\[
	\int\limits_{\partial\Delta(r) } H_\kappa(\eta (x))\mathrm{d}\sigma(x)=\int\limits_{\partial\Delta(r) }{\int\limits_{\mathbb{R}^{d\kappa}}}^{\prime}e^{i<\lambda _1+\cdots +\lambda
		_\kappa,x>} \prod\limits_{j=1}^\kappa\sqrt{f(\|\lambda _j\|)} W(\mathrm{d}\lambda
	_1)\ldots W(\mathrm{d}\lambda _\kappa)\mathrm{d}\sigma(x).
	\]
	As $\prod\limits_{j=1}^\kappa\sqrt{f(\|\lambda _j\|)}\in L_2(\mathbb{R}^{d\kappa})$ then a stochastic Fubini theorem, see Theorem~5.13.1 in \cite{pec}, can be used to interchange the integrals which results in
	\begin{equation}\label{spl}
		X_{\kappa,r}(\Delta)\stackrel{\mathcal{D}}{=}
		c_2^{\kappa/2}(d,\alpha){\int\limits_{\mathbb{R}^{d\kappa}}}^{\prime}\frac{\mathcal{K}(\lambda _1+\cdots +\lambda _\kappa)Q_r(\lambda _1,\ldots ,\lambda _\kappa)W(\mathrm{d}\lambda
			_1)\ldots W(\mathrm{d}\lambda _\kappa)}{\left\| \lambda _1\right\| ^{(d-\alpha )/2}\cdots
			\left\| \lambda _\kappa\right\| ^{(d-\alpha )/2}},
	\end{equation}
	where
	\begin{equation}\label{qqq}
	Q_r(\lambda _1,\ldots ,\lambda _\kappa): =r^{\kappa(\alpha-d)/2}L^{-\kappa/2}(r)\
	c_2^{-\kappa/2}(d,\alpha)  \left[ \prod\limits_{j=1}^\kappa\left\| \lambda _j\right\| ^{d-\alpha}f\left( \frac{\left\| \lambda _j\right\| }r\right) \right] ^{1/2}.
	\end{equation}

	By the isometry property of multiple stochastic integrals
	\[
	R_r:=\frac{\mathbb{E}\left| X_{\kappa,r}(\Delta)-X_\kappa(\Delta)\right|^2}{c_2^{\kappa}(d,\alpha)}
	=\int\limits_{\mathbb{R}^{d\kappa}}\frac{|\mathcal{K}(\lambda _1+\cdots +\lambda _\kappa)|^2 \left(Q_r(\lambda _1,\ldots ,\lambda _\kappa)-1\right)^2}{\left\| \lambda _1\right\| ^{d-\alpha}\cdots
		\left\| \lambda _\kappa\right\| ^{d-\alpha}}\, \mathrm{d}\lambda
	_1\ldots \mathrm{d}\lambda _\kappa.
	\]
	
	Using (\ref{f}) and properties of slowly varying functions we conclude that $Q_r(\lambda _1,\ldots ,\lambda
	_\kappa)$ converges pointwise to 1, when $r\to \infty.$
	Hence, by Lebesgue's dominated convergence theorem the integral converges to zero if there is some integrable function which dominates integrands for all $r.$
	
	Let us split $\mathbb{R}^{d\kappa}$ into the regions
	\[B_\mu :=\{(\lambda _1,...,\lambda _\kappa)\in\mathbb{R}^{d\kappa}: ||\lambda_j||\le 1,\ \mbox{if}\ \mu_j=-1,\ \mbox{and}\ ||\lambda_j||> 1, \ \mbox{if}\ \mu_j=1, j=1,...,\kappa\}, \]
	where $\mu=(\mu_1,...,\mu_\kappa)\in \{-1,1\}^\kappa$ is a binary vector of length $\kappa.$
	Then we can represent the integral $R_r$ as
	\[R_r:=\bigcup_{\mu\in \{-1,1\}^\kappa}\int\limits_{B_\mu}|\mathcal{K}(\lambda _1+\cdots +\lambda _\kappa)|^2 \left(Q_r(\lambda _1,\ldots ,\lambda _\kappa)-1\right)^2\frac{\mathrm{d}\lambda
		_1\ldots \mathrm{d}\lambda _\kappa}{\left\| \lambda _1\right\| ^{d-\alpha}\cdots
		\left\| \lambda _\kappa\right\| ^{d-\alpha}}.\]
	
	If $(\lambda _1,...,\lambda _\kappa)\in B_\mu$ we estimate the integrand as follows	
	\[\frac{|\mathcal{K}(\lambda _1+\cdots +\lambda _\kappa)|^2 \left(Q_r(\lambda _1,\ldots ,\lambda _\kappa)-1\right)^2}{\left\| \lambda _1\right\| ^{d-\alpha}\cdots
		\left\| \lambda _\kappa\right\| ^{d-\alpha}}\le \frac{2\, |\mathcal{K}(\lambda _1+\cdots +\lambda _\kappa)|^2 }{\left\| \lambda _1\right\| ^{d-\alpha}\cdots
		\left\| \lambda _\kappa\right\| ^{d-\alpha}}\left(Q^2_r(\lambda _1,\ldots ,\lambda _\kappa)+1\right)\]
	\[= \frac{2\, |\mathcal{K}(\lambda _1+\cdots +\lambda _\kappa)|^2 }{\left\| \lambda _1\right\| ^{d-\alpha}\cdots
		\left\| \lambda _\kappa\right\| ^{d-\alpha}}\left(\prod\limits_{j=1}^\kappa ||\lambda _j||^{\mu_j\delta}\cdot \prod\limits_{j=1}^\kappa \frac{\left(\frac{r}{||\lambda _j||}\right)^{\mu_j\delta}L\left(\frac{r}{||\lambda _j||}\right)}{r^{\mu_j\delta}L(r)} +1\right)\]
	\[\le \frac{2\,|\mathcal{K}(\lambda _1+\cdots +\lambda _\kappa)|^2 }{\left\| \lambda _1\right\| ^{d-\alpha}\cdots
		\left\| \lambda _\kappa\right\| ^{d-\alpha}}\left(1+
	\prod\limits_{j=1}^\kappa\left\| \lambda _1\right\| ^{\mu_j\delta}\cdot \sup_{(\lambda _1,...,\lambda _\kappa)\in B_\mu }\prod\limits_{j=1}^\kappa \frac{\left(\frac{r}{||\lambda _j||}\right)^{\mu_j\delta}L\left(\frac{r}{||\lambda _j||}\right)}{r^{\mu_j\delta}L(r)}\right), \]
	where $\delta$ is an arbitrary positive number.
	By Theorem~1.5.3 \cite{bin}
	\[\lim_{r\to \infty}\frac{\sup_{||\lambda _j||\le 1}\left(\frac{r}{||\lambda _j||}\right)^{-\delta}L\left(\frac{r}{||\lambda _j||}\right)}{r^{-\delta}L(r)}=\lim_{r\to \infty}\frac{\sup_{z\ge r}z^{-\delta}L\left(z\right)}{r^{-\delta}L(r)}=1;\]
	
	\[\lim_{r\to \infty}\frac{\sup_{||\lambda _j||> 1}\left(\frac{r}{||\lambda _j||}\right)^{\delta}L\left(\frac{r}{||\lambda _j||}\right)}{r^{\delta}L(r)}=\lim_{r\to \infty}\frac{\sup_{z\in[0,r]}z^{\delta}L\left(z\right)}{r^{\delta}L(r)}=1.\]
	
	Therefore, there exists $r_0>0$ such that for all $r\ge r_0$ and  $(\lambda _1,...,\lambda _\kappa)\in B_\mu$
	\[\frac{|\mathcal{K}(\lambda _1+\cdots +\lambda _\kappa)|^2 \left(Q_r(\lambda _1,\ldots ,\lambda _\kappa)-1\right)^2}{\left\| \lambda _1\right\| ^{d-\alpha}\cdots
		\left\| \lambda _\kappa\right\| ^{d-\alpha}}\le \frac{2\, |\mathcal{K}(\lambda _1+\cdots +\lambda _\kappa)|^2 }{\left\| \lambda _1\right\| ^{d-\alpha}\cdots
		\left\| \lambda _\kappa\right\| ^{d-\alpha}}\]
	\begin{equation}\label{upper}
	+2\,C\,\frac{|\mathcal{K}(\lambda _1+\cdots +\lambda _\kappa)|^2 }{\left\| \lambda _1\right\| ^{d-\alpha-\mu_1\delta}\cdots
		\left\| \lambda _\kappa\right\| ^{d-\alpha-\mu_\kappa\delta}}.
	\end{equation}
	
	By Lemma~\ref{finint}, if we chose $\delta\in \left(0,\min \left(\alpha,{(d-1)}/{\kappa}-\alpha\right)\right),$ the upper bound in (\ref{upper}) is an integrable function on each  $B_\mu$ and hence on $\mathbb{R}^{d\kappa}$ too.
	By Lebesgue's dominated convergence theorem $\lim_{r\to \infty} \mathbf{E}\left| X_{\kappa,r}(\Delta)-X_\kappa(\Delta)\right|^2=0,$ which completes the proof.
\end{proof}

 \section{Application to sojourn measures}\label{sojs}   
    An important example of Theorem~\ref{th5} is sojourn measures of random fields defined on hypersurfaces, see \cite{Adl}, \cite{LeoNew}. Namely, consider an application of Theorem~\ref{th5} to the functionals
    \[\int\limits_{\partial \Delta(r)}\chi(S(\eta(x)) > b)\mathrm{d}\sigma(x),\]
    where $S:\mathbb{R} \rightarrow \mathbb{R}$ is a such function that the set $\{t: S(t) >b\}$ can be represented as a finite union of intervals $(t_1, t_2), \, -\infty\leq t_1 <t_2\leq+\infty.$ Examples of the function $S(\cdot)$ are polynomials or other smooth functions having finite number of zeros.
    \begin{rem}
    	As particular cases, this construction includes $\int\limits_{\rm{S}_{d-1}(r)}\chi(\eta(x) > b)\mathrm{d}\sigma(x)$ and $\int\limits_{\rm{S}_{d-1}(r)}\chi(|\eta(x)| > b)\mathrm{d}\sigma(x)$ considered in \cite{Iv}.
    \end{rem}
    
     As for some $ N \geq 1$ it holds $\{t: S(t)>b\} = \bigcup\limits_{i=1}^{N}(t_i, t_{i+1}),$ where the intervals $(t_i, t_{i+1})$ are  disjoint, we have to study    
    \[\int\limits_{\partial \Delta(r)}\chi\left(\eta(x) \in \bigcup\limits_{i=1}^{N}(t_i, t_{i+1})\right)\mathrm{d}\sigma(x) = \sum\limits_{i = 1}^{N}\int\limits_{\partial \Delta(r)}\chi\left(\eta(x) \in (t_i,t_{i+1})\right)\mathrm{d}\sigma(x).
    \]
    
    Note, that the indicator function $\chi(\omega > t)$ can be expanded in the Hermite series as 
    \[
    \chi(\omega > t) = \sum\limits_{j = 0}^{\infty}\frac{C_j^{(t)}H_j(\omega)}{j!},
    \]
    where \[C_j^{(t)} = 
    \begin{cases}
    1 - \Phi(t), & j=0,\\
    \phi(t)H_{j-1}(t), & j\geq 1,
    \end{cases}\]
    and $\Phi(\cdot)$ and $\phi(\cdot)$ are the cdf and pdf for $\mathcal{N}(0,1)$ respectively.
    
    Then,
    \[
    \chi\left(\omega \in (t_i,t_{i+1})\right) = \chi\left(\omega > t_i\right) - \chi\left(\omega > t_{i+1}\right) = \Phi(t_{i+1}) - \Phi(t_i)
    \]
    
    \[
    + \sum\limits_{j = 1}^{\infty}\frac{\phi(t_i)H_{j-1}(t_i) - \phi(t_{i+1})H_{j-1}(t_{i+1})}{j!}H_j(w),
    \]
    where $\phi(\pm \infty) = 0.$
    
    Hence, 
    \[
    \sum\limits_{i = 1}^{N}\chi(\omega \in (t_i, t_{i+1})) = \sum\limits_{i = 1}^{N}\left(\Phi(t_{i+1}) - \Phi(t_i)\right)
    \]
    \[+
    \sum\limits_{j = 1}^{\infty}\sum\limits_{i = 1}^{N}\frac{\phi(t_i)H_{j-1}(t_i) - \phi(t_{i+1})H_{j-1}(t_{i+1})}{j!}H_j(w).
    \]
    
    Therefore, the Hermite rank of the function $\chi(S(x) > b)$ is such $j^{*}\geq 1$ that it is the smallest number for which
    \begin{equation*}
    C_{j^{*}\mkern-8mu, b} = \sum\limits_{i = 1}^{N}\phi(t_i)H_{j^{*}-1}(t_i) - \phi(t_{i+1})H_{j^{*}-1}(t_{i+1}) \neq 0.
    \end{equation*}
    
\begin{theorem}
	Let $j^{*} = \min\{j \in \mathbb{N}: \sum\limits_{i = 1}^{N}\phi(t_i)H_{j-1}(t_i) - \phi(t_{i+1})H_{j-1}(t_{i+1}) \neq 0\}.$ Then, under assumptions of Theorem~\ref{th5}
	\[
	X_{\kappa, r}(\Delta) = r^{(\kappa\alpha )/2-d+1}L^{-\kappa/2}(r)\int\limits_{\partial\Delta(r)}\chi(S(\eta (x)) > b)\,\mathrm{d}\sigma(x)
	\]
	converges to $\frac{C_{\kappa, b}}{\kappa!}X_\kappa(\Delta),$ where  $X_{\kappa}(\Delta)$ is given by (\ref{Xk}), and $\kappa = j^{*}$.
\end{theorem}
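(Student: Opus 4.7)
The plan is to identify the Hermite rank of the function $G(\omega):=\chi(S(\omega)>b)$ as $\kappa=j^{*}$ and then combine the reduction result in Theorem~\ref{th4} with the limit result in Theorem~\ref{th5}. Since $G$ is bounded, it automatically lies in $L_{2}(\mathbb{R},\phi(w)\,dw)$, so all Hermite manipulations below are fully rigorous.

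The expansions already assembled in the excerpt give
\[
G(\omega) = C_{0} + \sum_{j\geq 1}\frac{C_{j,b}}{j!}H_{j}(\omega),\qquad C_{0}:=\sum_{i=1}^{N}\bigl[\Phi(t_{i+1})-\Phi(t_{i})\bigr]=\mathbf{E}\,G(\eta(0)),
\]
with $C_{j,b}=\sum_{i=1}^{N}\bigl[\phi(t_{i})H_{j-1}(t_{i})-\phi(t_{i+1})H_{j-1}(t_{i+1})\bigr]$. By the very definition of $j^{*}$, the centered function $G-C_{0}$ has Hermite rank exactly $\kappa=j^{*}$, and its $\kappa$-th coefficient is $C_{\kappa,b}$.

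Next, I would consider the centered integral
\[
\widetilde{K}_{r} := \int_{\partial\Delta(r)}\bigl[G(\eta(x))-C_{0}\bigr]\,\mathrm{d}\sigma(x)
= \sum_{j\geq \kappa}\frac{C_{j,b}}{j!}\int_{\partial\Delta(r)}H_{j}(\eta(x))\,\mathrm{d}\sigma(x),
\]
together with its leading term $\widetilde{K}_{r,\kappa}=(C_{\kappa,b}/\kappa!)\int_{\partial\Delta(r)}H_{\kappa}(\eta(x))\,\mathrm{d}\sigma(x)$. Because $\alpha\in(0,(d-1)/\kappa)$ by hypothesis, Theorem~\ref{th4} applies and shows that $\widetilde{K}_{r}$ and $\widetilde{K}_{r,\kappa}$, normalized by $\sqrt{\mathbf{Var}\,\widetilde{K}_{r,\kappa}}$, have the same weak limit. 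The variance asymptotics derived inside the proof of Theorem~\ref{th4} show $\sqrt{\mathbf{Var}\,\widetilde{K}_{r,\kappa}}$ to be a constant multiple of $r^{d-1-\kappa\alpha/2}L^{\kappa/2}(r)$, up to $(1+o(1))$, so the same reduction holds under the deterministic normalization $r^{(\kappa\alpha)/2-d+1}L^{-\kappa/2}(r)$ that appears in the statement. An application of Theorem~\ref{th5} to $\int_{\partial\Delta(r)}H_{\kappa}(\eta(x))\,\mathrm{d}\sigma(x)$, followed by multiplication by $C_{\kappa,b}/\kappa!$, then identifies the limit in distribution as $\frac{C_{\kappa,b}}{\kappa!}X_{\kappa}(\Delta)$.

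The only genuine subtlety, and the place I would be careful, is the handling of the deterministic mean: applied to $C_{0}|\partial\Delta(r)|$, the normalization $r^{(\kappa\alpha)/2-d+1}L^{-\kappa/2}(r)$ diverges, so the statement must be read as applying to the centered quantity $X_{\kappa,r}(\Delta)-\mathbf{E}\,X_{\kappa,r}(\Delta)$, which is consistent with the fact that the target $X_{\kappa}(\Delta)$ is a mean-zero Wiener-It\^o integral. With this convention in place, no further analytical work is required beyond quoting Theorems~\ref{th4} and~\ref{th5}.
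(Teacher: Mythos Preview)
Your proposal is correct and follows exactly the route the paper takes: the text preceding the theorem statement derives the Hermite expansion of $\chi(S(\omega)>b)$ and identifies its rank as $j^{*}$, after which the theorem is stated as an immediate consequence of Theorems~\ref{th4} and~\ref{th5} with no separate proof block. Your observation about the centering by $C_{0}|\partial\Delta(r)|$ is apt---the paper is silent on this point, and the statement should indeed be read for the centered functional, as you indicate.
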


\begin{example}
	Let us study $\int\limits_{\partial\Delta(r)}\chi(\eta^l (x) > b)\,\mathrm{d}\sigma(x).$ If $l$ is odd, then $\chi\left(\omega^l > b\right) = \chi\left(\omega > b^{1/l}\right).$ In this case $C_{1,b} =\phi(b^{1/l}) \neq 0$ and the asymptotic is given by $\phi(b^{1/l})X_1(\Delta)$ which has a Gaussian distribution.
	
	If $l$ is even, then for $b > 0$ it holds $\chi\left(\omega^l > b\right) = \chi\left(\omega > b^{1/l}\right) + \chi\left(\omega < - b^{1/l}\right) = 1- \chi\left(-b^{1/l}<\omega<b^{1/l}\right).$
	In this case, $C_{1,b} = \phi(-b^{1/l}) - \phi(b^{1/l}) = 0$. However, for $j = 2$ we obtain
	\[
	C_{2,b} = \phi(-b^{1/l})(-b^{1/l}) - \phi(b^{1/l})b^{1/l} = -2b^{1/l}\phi(b^{1/l})\neq 0.
	\]
	Therefore, the asymptotic is the Rosenblatt-type distribution of $-b^{1/l}\phi(b^{1/l})X_2(\Delta).$ 
\end{example}

\begin{example}
	Now, let us study $\int\limits_{\partial\Delta(r)}\chi(S(\eta (x)) >0)\,\mathrm{d}\sigma(x),$ where $S(x) = -x^3 +b^2x$ and $b = \left(2\ln(2)\right)^{1/2}.$ Since 
	\[\int\limits_{\partial\Delta(r)}\chi(S(\eta (x)) >0)\,\mathrm{d}\sigma(x) = \int\limits_{\partial\Delta(r)}\chi(\eta (x) \in (-\infty, -b)\cup (0, b))\,\mathrm{d}\sigma(x),\]
	we can compute coefficients $C_{j, b}$ as follows
	\[C_{1, b} = -\phi(-b)H_{0}(-b) + \phi(0)H_{0}(0) - \phi(b)H_{0}(b) = \phi(0) - 2\phi(b) =0,\]
	\[
	C_{2, b} = -\phi(-b)H_{1}(-b) + \phi(0)H_{1}(0) - \phi(b)H_{1}(b) = b\phi(-b) - b\phi(-b) =0,
	\]
	\[
	C_{3, b} = -\phi(-b)H_{2}(-b) + \phi(0)H_{2}(0) - \phi(b)H_{2}(b) = -\phi(-b)(b^2-1)\] 
	\[- \phi(0) -\phi(-b)(b^2 - 1) =-\phi(0) -\phi(0)(b^2-1) = -b^2\phi(0)\neq 0,
	\]
	because $b = \left(2\ln(2)\right)^{1/2}.$
	
	Thus, in this case the limit distribution has $H\mbox{rank} = 3$.
\end{example}

\begin{example}
	In this example we show how to obtain the Hermite limit distribution with $H\mbox{rank} = 4$.
\begin{lemma}
	For each $p\in(0,1)$ there exist $q>1$, such that $p\phi(p) = q\phi(q).$
\end{lemma}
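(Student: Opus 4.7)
The plan is to reduce the statement to a monotonicity and intermediate value argument applied to the function $g(x) := x\phi(x) = \frac{x}{\sqrt{2\pi}}e^{-x^2/2}$ on $(0,\infty)$. The claim $p\phi(p) = q\phi(q)$ is equivalent to $g(p)=g(q)$, so it suffices to exhibit, for each $p\in(0,1)$, a preimage of the value $g(p)$ lying in $(1,\infty)$.

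First I would differentiate to obtain $g'(x) = \frac{1-x^2}{\sqrt{2\pi}}e^{-x^2/2}$. This shows that $g$ is strictly increasing on $(0,1)$, attains its unique maximum value $g(1) = \phi(1)$ at $x=1$, and is strictly decreasing on $(1,\infty)$. Combined with $g(0)=0$ and $\lim_{x\to\infty}g(x)=0$, this pins down the shape of $g$ completely on $(0,\infty)$.

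Next, for any fixed $p\in(0,1)$, strict monotonicity on $(0,1)$ gives $0 < g(p) < g(1)$. Since $g$ is continuous on $[1,\infty)$, strictly decreasing there, with $g(1)>g(p)$ and $g(x)\to 0 < g(p)$ as $x\to\infty$, the intermediate value theorem produces a (unique) $q\in(1,\infty)$ with $g(q)=g(p)$, which is precisely the required equality $p\phi(p) = q\phi(q)$.

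There is no real obstacle here: the argument is a one-line computation of $g'$ followed by a textbook application of the intermediate value theorem, and the hypothesis $p\in(0,1)$ is used exactly to ensure that $g(p)$ is below the peak value $g(1)$ so that a second preimage exists on $(1,\infty)$. If one wanted to emphasize uniqueness of $q$, it would follow immediately from strict monotonicity of $g$ on $(1,\infty)$.
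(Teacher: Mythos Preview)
Your proof is correct and follows essentially the same approach as the paper: compute $(x\phi(x))' = \phi(x)(1-x^2)$ to see that $x\phi(x)$ increases on $(0,1)$, decreases on $(1,\infty)$, and vanishes at $0$ and at infinity, then apply the intermediate value theorem on $(1,\infty)$. The only addition you make is the remark on uniqueness of $q$, which the paper does not state but which follows immediately, as you note, from strict monotonicity.
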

\begin{proof}
	Note that $(x\phi(x))^{\prime} = \phi(x) - x^2\phi(x) = \phi(x)(1-x^2).$ Thus, $x\phi(x)$ is an increasing function on $(0,1)$ and it is decreasing on $(1,\infty)$. As $x\phi(x) = 0$ for $x = 0$ and $x = +\infty$, then $0<p\phi(p)<\phi(1).$ Because $x\phi(x)$ is a continuous function there is $q > 1$ such that $p\phi(p) = q\phi(q).$
\end{proof}

Note, that $p\phi(p) = q\phi(q),\,p,q>0$ is equivalent to $p^2\phi^2(p) = q^2\phi^2(q)$, i.e. $q$ is a positive solution of the equation
\[-p^2e^{-p^2} = -q^2e^{-q^2}.\]

Thus, $q = \sqrt{{-\rm LambertW_{-1}}\left(-\frac{p^2}{e^{p^2}}\right)},$ where ${\rm lambertW_{-1}}(\cdot)$ is the branch of LambertW function satisfying $\rm LambertW(x) \leq -1,$ $-1/e<x<0,$ see \cite{Cor}.
\end{example}

Let $S(x) = -(x^2 - p^2)(x^2 - q^2).$ Then, $\{x\in\mathbb{R}:S(x)>0\} = (-q,-p)\cup(p,q).$

Let us compute the coefficient $C_{j,0}$.
\[C_{1,0} = \phi(-q) - \phi(-p) + \phi(p) -\phi(q) = 0, \]
\[C_{2,0} = \phi(-q)(-q) - \phi(-p)(-p) + \phi(p)p -\phi(q)q = 2(\phi(p)p -\phi(q)q)=0,\]
\[C_{3,0} = \phi(-q)(q^2 - 1) - \phi(-p)(p^2 - 1) + \phi(p)(p^2 - 1) - \phi(q)(q^2 -1) = 0, \]
\[
C_{4,0} = \phi(-q)(-q^3 + 3q) - \phi(-p)(-p^3 + 3p) +\phi(p)(p^3 -3p) - \phi(q)(q^3 - 3q) 
\]
\[
= \phi(-q)(-q^3) - \phi(-p)(-p^3) + \phi(p)p^3 - \phi(q)q^3 = 2(\phi(p)p^3 - \phi(q)q^3)
\]   
\[
< 2q^2(\phi(p)p - \phi(q)q) = 0.
\]

Therefore $C_{4,0} \neq 0$ and the asymptotic of $\int\limits_{\partial\Delta(r)}\chi(S(\eta (x)) > 0)\,\mathrm{d}\sigma(x)$ when $r\rightarrow\infty$ is the random variable $\frac{C_{4,0}}{4!}X_4(\Delta).$

\section{Rate of convergence}\label{convs}
In this section we investigate rates of convergence of random variables $K_r$ and $K_{r,\kappa}$ to their asymptotic distribution derived in Theorem~\ref{th5}. For readability we will denote Wiener-It\'{o} integrals of rank $\kappa$ by $I_\kappa(f)$, where $f(\cdot)$ is an integrand. For more details about Wiener-It\'{o} integrals and properties of function $f(\cdot)$ one can refer to \cite{Ito, Maj}.
To obtain rates of convergence we will use some fine properties of Hermite-type distributions. The following result was obtained in \cite{New} for $X_\kappa(\Delta)$. Since the proof does not rely on the specific form of $X_\kappa(\Delta)$, this theorem can be easily generalized as follows
\begin{theorem}{\rm \cite{New}}\label{cmb}
	For any $\kappa \in \mathbb{N}$ and an arbitrary positive $\varepsilon$ it holds
	\[\rho\left(I_\kappa(f),I_\kappa(f)+\varepsilon\right)\leq C\varepsilon^{a},\]
	where $a = 1$ if $\kappa < 3$ and $a = 1/\kappa$ if $\kappa \geq 3$.
\end{theorem}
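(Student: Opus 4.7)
The plan is to reduce the statement to a concentration inequality. By the definition of the Kolmogorov metric, the shift invariance of Lebesgue measure gives
\begin{equation*}
\rho\bigl(I_\kappa(f), I_\kappa(f)+\varepsilon\bigr) = \sup_{z\in\mathbb{R}} \bigl|P(I_\kappa(f)\le z) - P(I_\kappa(f)\le z-\varepsilon)\bigr| = \sup_{z\in\mathbb{R}} P\bigl(z-\varepsilon < I_\kappa(f) \le z\bigr),
\end{equation*}
so the claim is a uniform-in-$z$ bound on the concentration function of a multiple Wiener–Itô integral lying in the $\kappa$-th Wiener chaos. Because, as the authors note, the argument in \cite{New} for $X_\kappa(\Delta)$ never uses the specific kernel, I would simply transcribe their strategy with $I_\kappa(f)$ in place of $X_\kappa(\Delta)$, splitting on whether $\kappa \in \{1,2\}$ or $\kappa \ge 3$.

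For $\kappa = 1$, $I_1(f)$ is a centred Gaussian with variance $\|f\|_{L^2}^{2}$, and its density is uniformly bounded by $(2\pi)^{-1/2}\|f\|_{L^2}^{-1}$, so the concentration probability is at most $C\varepsilon$. For $\kappa = 2$, an element of the second Wiener chaos admits a spectral representation $\sum_k \lambda_k(Z_k^{2}-1)$ with i.i.d.\ standard Gaussians $Z_k$ and square-summable $\lambda_k$; it is classical that such a random variable has a bounded density, which again yields the linear bound $C\varepsilon$ and matches the exponent $a=1$.

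For $\kappa\ge 3$ the density of $I_\kappa(f)$ is in general unbounded near critical values, so the pointwise-density estimate fails and one needs the Carbery–Wright inequality, which for any $F$ in the $\kappa$-th Wiener chaos and any $c\in\mathbb{R}$ asserts
\begin{equation*}
P\bigl(|F - c| \le \varepsilon\bigr) \le C_\kappa \, (\mathbf{Var}\, F)^{-1/(2\kappa)} \, \varepsilon^{1/\kappa}.
\end{equation*}
Applying this to $F = I_\kappa(f)$ with the choice $c = z - \varepsilon/2$ and then taking the supremum over $z\in\mathbb{R}$ gives the desired exponent $a = 1/\kappa$. The constant depends on $\kappa$ and on the $L^{2}$-norm of $f$, but is uniform in $\varepsilon$.

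The main obstacle is the Carbery–Wright input itself: its original statement concerns polynomials in finitely many Gaussians, and applying it to a general Wiener–Itô integral requires approximating $I_\kappa(f)$ in $L^{2}$ by polynomials of degree exactly $\kappa$ in a countable i.i.d.\ Gaussian family, then passing to the limit while keeping the variance-dependent constant under control. This extension is however standard in the quantitative-CLT-on-chaos literature (and is exactly what \cite{New} carries out for $X_\kappa(\Delta)$); once it is available, the generalisation to arbitrary $I_\kappa(f)$ is automatic, which is precisely the authors' point.
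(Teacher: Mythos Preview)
The paper does not give its own proof of this theorem: it is quoted from \cite{New}, with the sole remark that the argument there for $X_\kappa(\Delta)$ ``does not rely on the specific form of $X_\kappa(\Delta)$'' and hence carries over to a general $I_\kappa(f)$. Your proposal correctly reconstructs what that argument must be---reduction to the concentration function, bounded-density estimates in the first two chaoses, and the Carbery--Wright small-ball inequality for $\kappa\ge 3$---and this is indeed the route taken in \cite{New}. So there is nothing to compare: your sketch is the cited proof, and your observation about the approximation step needed to pass from polynomials to general chaos elements is accurate.
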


The corollary of Theorem~\ref{th4} is that the limit distribution of the functional $K_r$ does not depend on the ``tail'' $V_r$ in the Hermite expansion of the function $G(r).$ However, in this section we will show that although $V_r$ does not affect the limit distribution it does affect the rate of convergence.

First, let us consider the case where $G(\cdot) = \frac{C_\kappa}{\kappa!}H_\kappa(\cdot)$. Then, $V_r = 0$ and the Hermite rank of $G(\cdot)$ is $\kappa$. We are interested in \[{\rho}\left( \frac{\kappa!\,K_{r,\kappa}}{C_\kappa\,r^{d-1-\frac{\kappa\alpha}{2}}L^{%
		\frac{\kappa}{2}}(r)},X_\kappa(\Delta)\right) = {\rho}\left(X_{\kappa, r}(\Delta),X_\kappa(\Delta)\right).\]
	
By (\ref{spl}) \[
X_{\kappa,r}(\Delta)=
c_2^{\kappa/2}(d,\alpha){\int\limits_{\mathbb{R}^{d\kappa}}}^{\prime}\frac{\mathcal{K}(\lambda _1+\cdots +\lambda _\kappa)Q_r(\lambda _1,\ldots ,\lambda _\kappa)W(\mathrm{d}\lambda
	_1)\ldots W(\mathrm{d}\lambda _\kappa)}{\left\| \lambda _1\right\| ^{(d-\alpha )/2}\cdots
	\left\| \lambda _\kappa\right\| ^{(d-\alpha )/2}},
\]
where $Q(\cdot)$ is defined by (\ref{qqq}). Therefore, ${\rho}\left(X_{\kappa, r}(\Delta),X_\kappa(\Delta)\right)$ is the Kolmogorov's distance between two multiple Wiener-It\'{o} integrals of the rank $\kappa$. To estimate this distance we prove the following result.
\begin{lemma}\label{dst}
	Let $I_\kappa(f_1)$ and $I_\kappa(f_2)$ be two  Wiener-It\`{o} integrals of order $\kappa$, and $f_1,\,f_2$ be symmetric functions in $L_2({\mathbb{R}^d}),\,d\geq1$. Then,
	\begin{equation*}
	\rho\left(I_\kappa(f_1),I_\kappa(f_2) \right) \leq C\|f_1 - f_2\|^{\frac{1}{\kappa+1/2}}, \quad \text{if } \kappa \geq 3,
	\end{equation*}
	and
	\begin{equation*}
	\rho\left(I_\kappa(f_1),I_\kappa(f_2) \right) \leq C\|f_1 - f_2\|^{\frac{2}{3}}, \quad \text{if } \kappa < 3.
	\end{equation*}
\end{lemma}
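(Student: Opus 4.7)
The plan is to combine Lemma~\ref{lem1} with Theorem~\ref{cmb} and Chebyshev's inequality, and then optimise the free parameter $\varepsilon>0$. The exponents $2/3$ and $1/(\kappa+1/2)$ appearing in the statement are precisely what this kind of balancing produces against the exponents $a=1$ and $a=1/\kappa$ supplied by Theorem~\ref{cmb}.

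First I would use the linearity of multiple Wiener--It\^o integrals to write $I_\kappa(f_1) = I_\kappa(f_2) + I_\kappa(f_1 - f_2)$, and apply Lemma~\ref{lem1} with $X = I_\kappa(f_2)$, $Y = I_\kappa(f_1 - f_2)$, and $Z = I_\kappa(f_2)$. Since $\rho(I_\kappa(f_2),I_\kappa(f_2))=0$, this gives, for any $\varepsilon>0$,
$$\rho\bigl(I_\kappa(f_1),I_\kappa(f_2)\bigr)\le \rho\bigl(I_\kappa(f_2)+\varepsilon,\,I_\kappa(f_2)\bigr)+P\bigl(|I_\kappa(f_1-f_2)|\ge\varepsilon\bigr).$$

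Next I would estimate each term separately. For the first, Theorem~\ref{cmb} (applied to $f_2$) yields $\rho(I_\kappa(f_2)+\varepsilon,I_\kappa(f_2))\le C\varepsilon^{a}$, where $a=1$ if $\kappa<3$ and $a=1/\kappa$ if $\kappa\ge 3$. For the second, Chebyshev's inequality combined with the standard isometry for symmetric multiple Wiener--It\^o integrals, $\mathbf{E}[I_\kappa(f_1-f_2)^2]=\kappa!\,\|f_1-f_2\|^2$, gives
$$P\bigl(|I_\kappa(f_1-f_2)|\ge\varepsilon\bigr)\le \frac{\kappa!\,\|f_1-f_2\|^2}{\varepsilon^2}.$$
Here the symmetry of $f_1-f_2$ is inherited from the symmetry of $f_1$ and $f_2$, which is what allows the isometry to be used with this explicit constant.

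Finally I would optimise over $\varepsilon>0$ by balancing $C\varepsilon^a$ against $\kappa!\,\|f_1-f_2\|^2\varepsilon^{-2}$. Setting $\varepsilon=\|f_1-f_2\|^{2/(a+2)}$ produces the bound $C\|f_1-f_2\|^{2a/(a+2)}$. Substituting $a=1$ yields the exponent $2/3$ for the case $\kappa<3$, while $a=1/\kappa$ yields $2/(1+2\kappa)=1/(\kappa+1/2)$ for $\kappa\ge 3$, matching the claim. I do not anticipate a serious obstacle: all ingredients are already available in the paper, and the only mildly delicate point is keeping track of the exponent arithmetic when optimising in $\varepsilon$.
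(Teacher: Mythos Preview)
Your proposal is correct and follows essentially the same route as the paper: apply Lemma~\ref{lem1} with $X=Z=I_\kappa(f_2)$ and $Y=I_\kappa(f_1)-I_\kappa(f_2)$, bound the shift term by Theorem~\ref{cmb} and the tail term by Chebyshev plus the isometry, then balance $\varepsilon^{a}$ against $\varepsilon^{-2}\|f_1-f_2\|^2$ to obtain the exponent $2a/(2+a)$. The only cosmetic difference is that the paper writes $\varepsilon=\|f_1-f_2\|^{\beta}$ and optimises over $\beta$, whereas you set $\varepsilon=\|f_1-f_2\|^{2/(a+2)}$ directly; the resulting bounds are identical.
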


\begin{proof}
	By applying Lemma~\ref{lem1} to $X=I_\kappa(f_2),$ $Y=I_\kappa(f_1)-I_\kappa(f_2),$ and $Z=I_\kappa(f_2)$ we obtain
	\[
	\rho\left(I_\kappa(f_1),I_\kappa(f_2) \right) \leq {\rho}\left(I_\kappa(f_2)+\varepsilon,I_\kappa(f_2)\right)
	+P\left\{ \left|I_\kappa(f_1)-I_\kappa(f_2)\right| \geq \varepsilon \right\}.
	\]
	Using Theorem~\ref{cmb} we get
	\[
	\rho\left(I_\kappa(f_1),I_\kappa(f_2) \right) \leq C\varepsilon^a
	+P\left\{ \left|I_\kappa(f_1)-I_\kappa(f_2)\right| \geq \varepsilon \right\}
	\]
	\[
	\leq C\varepsilon^a +\varepsilon^{-2} \mathbf{Var}\left(I_\kappa(f_1)-I_\kappa(f_2)\right) \leq C\left(\varepsilon^a + \varepsilon^{-2}\|f_1 - f_2\|^2\right),
	\]
	where $a$ is defined in Theorem~\ref{cmb}.
	By choosing $\varepsilon = \|f_1 - f_2\|^\beta$ we get
	\[
	\rho\left(I_\kappa(f_1),I_\kappa(f_2) \right) \leq C\left(\|f_1 - f_2\|^{\beta a} + \|f_1 - f_2\|^{2-2\beta}\right).
	\]
	Since $\sup\limits_{\beta}\min(a\beta, 2-2\beta) = \frac{2a}{2+a}$, we have	
	\begin{equation*}
	\rho\left(I_\kappa(f_1),I_\kappa(f_2) \right) \leq C\|f_1 - f_2\|^{\frac{2a}{2+a}}.
	\end{equation*}
	Note, that $a = 1$ when $\kappa < 3$, thus 
	\begin{equation*}
	\rho\left(I_\kappa(f_1),I_\kappa(f_2) \right) \leq C\|f_1 - f_2\|^{\frac{2}{3}}, \quad \kappa = 1,2.
	\end{equation*}
	Furthermore, in the case of general $\kappa$, $a = 1/\kappa$ and therefore
	\begin{equation*}
	\rho\left(I_\kappa(f_1),I_\kappa(f_2) \right) \leq C\|f_1 - f_2\|^{\frac{2/\kappa}{2+1/\kappa}} = C\|f_1 - f_2\|^{\frac{1}{\kappa+1/2}}.\qedhere
	\end{equation*}
\end{proof}

\begin{rem}
	For the total variation distance $\rho_{TV}(\cdot)$ it was stated in \cite{DavMart} that \[\rho_{TV}\left(I_\kappa(f_1),I_\kappa(f_2) \right) \leq C\|f_1 - f_2\|^{\frac{1}{\kappa}}.\]
	Since the Kolmogorov's distance can be estimated by the total variation distance (for any random variables $\xi$ and $\eta$ it holds $\rho(\xi, \eta)\leq \rho_{TV}(\xi, \eta)$), result in \cite{DavMart} is an improvement of Lemma~\ref{dst}. But, in \cite{DavMart} only a sketch of a proof is provided,  and \cite{NourPol} questioned the result. Therefore, \cite{NourPol} proved that $\rho_{TV}\left(I_\kappa(f_1),I_\kappa(f_2) \right) \leq C\|f_1 - f_2\|^{\frac{1}{2\kappa}}.$  Note, that this result is worse than ours if we were to use it to estimate Kolmogorov's distance.
	Thus, Lemma~\ref{dst} is presented as a fully proven, self-contained result.
	Unfortunately, Lemma~\ref{lem1} that was used to obtain the result is not applicable for the total variation distance. Hence, our method can not be used for the total variation distance. Therefore, while the result in \cite{NourPol} performs worse in our case, it is more general as a whole.
	
	Recently, for the case of $\kappa = 2$, it was shown in \cite{Zint} that $\rho_{TV}\left(I_2(f_1),I_2(f_2) \right) \leq C\|f_1 - f_2\|.$ This result is an obvious improvement of the existing results. Thus, in the case $\kappa = 2$ we can use it to further sharpen our upper bound. However, we don't see how methods in \cite{Zint} can be used to obtain similar results for an arbitrary $\kappa$ as they heavily rely on the Chi-square expansion of the second order Wiener-It\`{o} integrals, which is not available for $\kappa>2$.
\end{rem}

Now, we apply Lemma~\ref{dst} to obtain the rate of convergence in Theorem~\ref{th5}.

\begin{theorem}
	Let $H 	\mathrm{rank}\,G=\kappa \in \mathbb{N}$ and Assumptions~{\rm{\ref{ass1}}} and {\rm{\ref{ass2}}} hold for $\alpha\in(0, \frac{d-1}{\kappa})$.
	
	If $\tau \in \left(-\frac{d - \kappa\alpha}{2},0\right)$ then for any $%
	\varkappa<\frac{a}{2+a}\min\left(\frac{\alpha(d-1-\kappa\alpha)}{%
		d-1-(\kappa-1)\alpha},\varkappa_1\right)$ 
	\begin{equation*}
		{\rho}\left( \frac{\kappa!\,K_r}{C_\kappa\,r^{d-1-\frac{\kappa\alpha}{2}}L^{%
				\frac{\kappa}{2}}(r)},X_\kappa(\Delta)\right)=o (r^{-\varkappa}),\quad
		r\rightarrow \infty ,
	\end{equation*}
	where 
	$
		\varkappa_1:=\min\left(-2\tau,\frac{1}{\frac{1}{d-2\alpha}+ \dots +\frac{1}{%
				d-\kappa\alpha} +\frac{1}{%
				d-1-\kappa\alpha}}\right)
	$
	and $a$ is the parameter from Theorem~{\rm\ref{cmb}}.
	
	If $\tau=0$ then 
	\begin{equation*}
		{\rho}\left( \frac{\kappa!\,K_r}{C_\kappa\,r^{d-1-\frac{\kappa\alpha}{2}}L^{%
				\frac{\kappa}{2}}(r)},X_\kappa(\Delta)\right)=g^{\frac{2a}{2+a}}(r), \quad
		r\rightarrow \infty.
	\end{equation*}
\end{theorem}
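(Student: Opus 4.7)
The plan is to use Lemma~\ref{lem1} to split the Kolmogorov distance into three manageable pieces and bound each using the tools already developed in the paper. Writing
\begin{equation*}
\frac{\kappa!\,K_r}{C_\kappa\,r^{d-1-\kappa\alpha/2}\,L^{\kappa/2}(r)}=X_{\kappa,r}(\Delta)+W_r,\qquad W_r:=\frac{\kappa!\,V_r}{C_\kappa\,r^{d-1-\kappa\alpha/2}\,L^{\kappa/2}(r)},
\end{equation*}
and applying Lemma~\ref{lem1} with $X=X_{\kappa,r}(\Delta)$, $Y=W_r$, $Z=X_\kappa(\Delta)$ gives
\begin{equation*}
\rho\!\left(\frac{\kappa!\,K_r}{C_\kappa\,r^{d-1-\kappa\alpha/2}\,L^{\kappa/2}(r)},\,X_\kappa(\Delta)\right)\le \rho(X_{\kappa,r}(\Delta),X_\kappa(\Delta))+\rho(X_\kappa(\Delta)+\varepsilon,X_\kappa(\Delta))+P(|W_r|\ge\varepsilon).
\end{equation*}

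The first term compares two Wiener--It\^{o} integrals of the same rank $\kappa$ whose kernels differ by the factor $Q_r-1$, so Lemma~\ref{dst} yields $\rho(X_{\kappa,r}(\Delta),X_\kappa(\Delta))\le C\,R_r^{a/(2+a)}$, with $R_r$ the kernel $L^{2}$-distance squared from the proof of Theorem~\ref{th5}. Theorem~\ref{cmb} handles the second term, giving $\rho(X_\kappa(\Delta)+\varepsilon,X_\kappa(\Delta))\le C\varepsilon^{a}$. For the third, Chebyshev's inequality gives $P(|W_r|\ge \varepsilon)\le \varepsilon^{-2}\,\mathbf{E}W_r^{2}$, and $\mathbf{E}W_r^{2}$ is, up to constants, the ratio $\mathbf{Var}\,V_r/\mathbf{Var}\,K_{r,\kappa}$ already controlled in the proof of Theorem~\ref{th4}.

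To finish I quantify $R_r$ and $\mathbf{E}W_r^{2}$. For $\mathbf{E}W_r^{2}$ I re-run the bound of $\mathbf{Var}\,V_r/\mathbf{Var}\,K_{r,\kappa}$ in Theorem~\ref{th4} keeping $\beta\in(0,1)$ as a free parameter; the contributions $I_1\le C\,r^{-\beta(d-1-\alpha\kappa-\delta)}$ and $I_2=o(r^{(\delta-\alpha)(1-\beta)})$ are optimally balanced by $\beta=\alpha/(d-1-(\kappa-1)\alpha)$, yielding $\mathbf{E}W_r^{2}\le C\,r^{-\alpha(d-1-\alpha\kappa)/(d-1-(\kappa-1)\alpha)+o(1)}$ as $\delta\to 0^+$. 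For $R_r$ I apply Remark~\ref{rem0} with $k=1$ to control each factor $L^{1/2}(r/\|\lambda_j\|)/L^{1/2}(r)-1$ by $C\,g(r)\,h_\tau(\cdot)\,(\cdot)^{\pm\delta}$ on the sub-regions $\|\lambda_j\|\le 1$ and $\|\lambda_j\|>1$; combined with the partition $\{B_\mu\}_\mu$ from the proof of Theorem~\ref{th5} and Lemma~\ref{finint} with the resulting shifted exponents, this produces $R_r\le C(g^{2}(r)+r^{-\varkappa_1})$ when $\tau<0$ and $R_r\le C\,g^{2}(r)$ when $\tau=0$. The $-2\tau$ entry in $\varkappa_1$ mirrors $g^{2}(r)\sim r^{2\tau}$, while the harmonic-like expression $[\sum_{j=2}^{\kappa}1/(d-j\alpha)+1/(d-1-\kappa\alpha)]^{-1}$ arises by introducing threshold radii $r^{\gamma_j}$ inside the splitting and then choosing the $\gamma_j$ so that the weight $\prod\|\lambda_j\|^{-(d-\alpha)}$, the average Fourier decay~(\ref{l2a}), and the recursive convolution estimate underlying Lemma~\ref{finint} are simultaneously saturated.

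Minimising $\varepsilon\mapsto C\varepsilon^{a}+\varepsilon^{-2}\mathbf{E}W_r^{2}$ gives the optimal $\varepsilon\sim(\mathbf{E}W_r^{2})^{1/(2+a)}$ and value $C(\mathbf{E}W_r^{2})^{a/(2+a)}$; adding the $R_r^{a/(2+a)}$ piece produces $\rho\le C\bigl(R_r^{a/(2+a)}+(\mathbf{E}W_r^{2})^{a/(2+a)}\bigr)$, which matches the claimed rate in both the $\tau<0$ and $\tau=0$ regimes. The main obstacle is the quantitative estimate of $R_r$: all the decay of $Q_r-1$ comes from $g(r)\,h_\tau(\cdot)$ via the slowly-varying-with-remainder condition, but after multiplication by the Potter factors $\|\cdot\|^{\pm\delta}$ and integration against $|\mathcal{K}|^{2}\prod\|\lambda_j\|^{-(d-\alpha)}$ the integrability margin ensured by Lemma~\ref{finint} is thin, and the precise exponent $\varkappa_1$ only emerges after a careful simultaneous optimisation of the threshold radii $\gamma_j$ in the domain split.
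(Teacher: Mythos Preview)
Your proposal is correct and follows essentially the same route as the paper's proof: the Lemma~\ref{lem1} decomposition with $X=X_{\kappa,r}(\Delta)$, $Y=W_r$, $Z=X_\kappa(\Delta)$; Lemma~\ref{dst} and Theorem~\ref{cmb} for the first two pieces; Chebyshev plus the quantitative $\mathbf{Var}\,V_r$ bound from Theorem~\ref{th4} with the optimal $\beta=\alpha/(d-1-(\kappa-1)\alpha)$ for the tail; and the same $\varepsilon$-optimisation at the end. The one point where your exposition is slightly off is the role of the $B_\mu$ partition in the $R_r$ estimate: the paper's primary split is into $A(r)=\{\max_j\|\lambda_j\|\le r^\gamma\}$ and its complement, with the SR2 bound (hence the $g^2(r)$ contribution) valid only on $A(r)$ because one needs $r/\|\lambda_j\|\to\infty$ to control $g(r/\|\lambda_j\|)/g(r)$; the $B_\mu$ partition and the upper bound (\ref{upper}) are invoked on the complement, where the recursive convolution with nested thresholds $\gamma>\gamma_0>\cdots>\gamma_{\kappa-2}$ and the optimisation (your ``simultaneous saturation'', the paper's Lemma~\ref{max}) produce the harmonic-sum exponent. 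With that clarification your sketch matches the paper's argument.
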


\begin{rem}
	If $\kappa = 1$, then $\varkappa_1=\min\left(-2\tau,d-1-\alpha\right).$
\end{rem}

\begin{rem}
	Note, that for $\tau= 0$ the rate of convergence does not depend on $\alpha$ or $d$. This is due to the reason that parameters $\alpha$ and $d$ affect the power of $r$ in the rate of convergence, but, in the case $\tau = 0$, the function $g(r)$ converges to 0 slower than any power of $r$.
\end{rem}

\begin{proof}
 Since $H {\rm rank}\,G=\kappa,$ it follows that $K_r$ can be represented in the space of squared-integrable random variables $L_2(\Omega)$ as
		\[K_r = K_{r,\kappa}+V_r :=\frac{C_\kappa }{\kappa!}
		\int\limits_{\partial\Delta(r)}H_\kappa (\eta (x))\,\mathrm{d}\sigma(x) + \sum_{j\geq \kappa + 1}\frac{C_j}{j!}
		\int\limits_{\partial\Delta(r)}H_j (\eta (x))\,\mathrm{d}\sigma(x),
		\]
		where $C_j $ are coefficients of the Hermite series of the function $G(\cdot).$
		
		By the proof of Theorem~\ref{th4} (specifically estimates (\ref{I11}) and (\ref{I21})), for sufficiently large $r$
		\[
		\mathbf{Var}\, V_{r} \leq C\,r^{2d-2-\kappa\alpha}L^{\kappa}(r)\left( r^{-\beta_1(d-1-\kappa\alpha-\delta)}
		+
		o\left(r^{-(\alpha-\delta)(1-\beta_1)}\right)\right).
		\]
		Since, by Remark~\ref{eql}, $L_0(\cdot) \sim L(\cdot)$, we can replace $L_0(\cdot)$ by $L(\cdot)$ in the above estimate. Thus, choosing $\beta_1=\frac{\alpha}{d-1-(\kappa - 1)\alpha}$ to minimize the upper bound we get
		\[
		\mathbf{Var}\, V_{r} \leq C r^{2d-2-\kappa\alpha}L^{\kappa}(r)r^{-\frac{\alpha(d-1-\kappa\alpha)}{d-1-(\kappa - 1)\alpha}+\delta}.
		\]
		It follows from Theorem~\ref{cmb} that 
		\[{\rho}\left(X_\kappa(\Delta)+\varepsilon,X_\kappa(\Delta)\right)\le C\varepsilon^a.\]
		
		Applying Chebyshev's inequality and Lemma~\ref{lem1} to $X=X_{\kappa, r}(\Delta)$, $Y=\frac{\kappa!\,V_r}{C_\kappa\,r^{d-1-\frac{\kappa\alpha}{2}}L^{\frac{\kappa}{2}}(r)},$ and $Z=X_{\kappa}(\Delta),$ we get
		\[{\rho}\left( \frac{\kappa!\,K_r}{C_\kappa\,r^{d-1-\frac{\kappa\alpha}{2}}L^{\frac{\kappa}{2}}(r)},X_\kappa(\Delta)\right)={\rho}\left( X_{\kappa, r}(\Delta)+\frac{\kappa!\,V_r}{C_\kappa\,r^{d-1-\frac{\kappa\alpha}{2}}L^{\frac{\kappa}{2}}(r)},X_\kappa(\Delta)\right)\]
		\[
		\le {\rho}\left(X_{\kappa, r}(\Delta),X_\kappa(\Delta)\right)+C\left(\varepsilon^a+ \varepsilon^{-2}\,r^{-\frac{\alpha(d-1-\kappa\alpha)}{d-1-(\kappa - 1)\alpha}+\delta}\right),
		\]
		for a sufficiently large $r.$
		
		Choosing $\varepsilon:=r^{-\frac{\alpha(d-1-\kappa\alpha)}{(2+a)(d-1-(\kappa - 1)\alpha)}}$ to minimize the second term we obtain
		\begin{equation}\label{up11}
		{\rho}\left( \frac{\kappa!\,K_r}{C_\kappa\,r^{d-1-\frac{\kappa\alpha}{2}}L^{\frac{\kappa}{2}}(r)},X_\kappa(\Delta)\right)\le {\rho}\left(X_{\kappa, r}(\Delta),X_\kappa(\Delta)\right)+C\,r^{\frac{-a\alpha(d-1-\kappa\alpha)}{(2+a)(d-1-(\kappa - 1)\alpha)}+\delta}.
		\end{equation}
		\begin{rem}
			As we can see from (\ref{up11}), for a sufficiently large $r$, the upper bound in (\ref{up11}) can be estimated by $C\max\left({\rho}\left(X_{\kappa, r}(\Delta),X_\kappa(\Delta)\right),\,r^{-\frac{a\alpha(d-1-\kappa\alpha)}{(2+a)(d-1-(\kappa - 1)\alpha)}+\delta}\right)$. Here, the part $r^{-\frac{a\alpha(d-1-\kappa\alpha)}{(2+a)(d-1-(\kappa - 1)\alpha)}+\delta}$ appears only when $V_r\neq 0$, i.e. $G(\cdot)\neq \frac{C_\kappa}{\kappa!}H_\kappa(\cdot)$. Depending on the values of parameters $d$, $\kappa$ and $\alpha$ it can considerably affect the rate of convergence. We will discuss it in more details at the end of this section.
		\end{rem}
		Using Lemma~\ref{dst} we get
		\[\rho\left(X_{\kappa, r}(\Delta),X_{\kappa}(\Delta) \right) \leq\]
		\begin{equation}\label{38}
		 C\left[\int\limits_{\mathbb{R}^{\kappa d}}\frac{|\mathcal{K}(\lambda _1+ \dots +\lambda _\kappa)|^2\left(Q_r(\lambda_1,\dots,\lambda_\kappa)-1\right)^2\mathrm{d}\lambda
			_1 \dots \,\mathrm{d}\lambda _\kappa}{\left\| \lambda _1\right\| ^{d-\alpha}\dots\left\| \lambda _\kappa\right\| ^{d-\alpha}}\right]^{\frac{a}{2+a}},
		\end{equation}	
		where
		\[
		Q_r(\lambda _1,\ldots ,\lambda _\kappa): =r^{\kappa(\alpha-d)/2}L^{-\kappa/2}(r)\
		c_2^{-\kappa/2}(d,\alpha)  \left[ \prod\limits_{j=1}^\kappa\left\| \lambda _j\right\| ^{d-\alpha}f\left( \frac{\left\| \lambda _j\right\| }r\right) \right] ^{1/2}.
		\]

		Let us rewrite the integral in (\ref{38}) as the sum of two integrals $I_3$ and $I_4$ with the integration regions $A(r):=\{(\lambda _1,\dots,\lambda _\kappa)\in\mathbb{R}^{\kappa d}:\ \max\limits_{i = \overline{1,\kappa}}(||\lambda _i||)\le r^\gamma\}$ and $\mathbb{R}^{\kappa d}\setminus A(r)$ respectively, where $\gamma\in(0,1).$ Our intention is to use the monotone equivalence property of regularly varying functions in the regions~$A(r).$
		
		First we consider the case of $(\lambda _1,\dots\lambda _\kappa)\in A(r).$  By Assumption~\ref{ass2} and the inequality \[\left|\sqrt{\prod\limits_{i = 1}^{\kappa}x_i}-1\right|\le \sum\limits_{i = 1}^{\kappa}\left|x^{\frac{\kappa}{2}}_i - 1\right|\] we obtain
		
		\[
		|Q_r(\lambda _1,\dots, \lambda _2)-1| = \left|\sqrt{\prod\limits_{j = 1}^{\kappa}\frac{L\left( \frac{r}{\left\| \lambda_j\right\| }\right)}{L(r)}}-1\right| \le \sum\limits_{j = 1}^{\kappa}\left|\frac{L^{\frac{\kappa}{2}}\left( \frac{r}{\left\| \lambda_j\right\| }\right)}{L^{\frac{\kappa}{2}}(r)}-1\right|.\]

		By Remark~\ref{rem0}, if $||\lambda _j||\in (1, r^\gamma),$ $j=\overline{1,\kappa},$ then for arbitrary $\delta_1 > 0$ and sufficiently large $r$ we get
		\[\left|1-\frac{L^{\frac{\kappa}{2}}\left( \frac{r}{\left\| \lambda _j\right\| }\right)}{L^{\frac{\kappa}{2}}(r)} \right|=\frac{L^{\frac{\kappa}{2}}\left( \frac{r}{\left\| \lambda _j\right\| }\right)}{L^{\frac{\kappa}{2}}(r)} \cdot\left|1-\frac{L^{\frac{\kappa}{2}}(r)}{L^{\frac{\kappa}{2}}\left( \frac{r}{\left\| \lambda _j\right\| }\right)} \right|\le C\,\frac{L^{\frac{\kappa}{2}}\left( \frac{r}{\left\| \lambda _j\right\| }\right)}{L^{\frac{\kappa}{2}}(r)}g\left( \frac{r}{\left\| \lambda _j\right\|}\right)\]
		\[\times\left\| \lambda _j\right\|^{\delta_1} h_{\tau}(\left\| \lambda _j\right\|)= C\,  \left\| \lambda _j\right\|^{\delta_1} h_{\tau}(\left\| \lambda _j\right\|)g(r)\frac{g\left( \frac{r}{\left\| \lambda _j\right\|}\right)}{g(r)}\left(\frac{L\left( \frac{r}{\left\| \lambda _j\right\| }\right)}{L(r)}\right)^{\frac{\kappa}{2}}.\]
		
		For any positive $\beta_2$ and $\beta_3$, applying Theorem~1.5.6 \cite{bin} to $g(\cdot)$ and $L(\cdot)$ and using the fact that $h_{\tau}\left(\frac{1}{t}\right) = -\frac{1}{t^{\tau}}h(t)$ we obtain
		\begin{equation}\label{gr1}
		\left|1-\frac{L^{\frac{\kappa}{2}}\left( \frac{r}{\left\| \lambda _j\right\| }\right)}{L^{\frac{\kappa}{2}}(r)} \right|\le C\,  \left\| \lambda _j\right\|^{\delta_1 + \frac{\kappa\beta_2}{2} + \beta_3 } \frac{h_{\tau}(\left\| \lambda _j\right\|)}{\left\| \lambda _j\right\|^{ \tau}}g(r) = C\,   \left\| \lambda _j\right\|^{\delta} h_{\tau}\left(\frac{1}{\left\| \lambda _j\right\|}\right)g(r). 
		\end{equation}

		By  Remark~\ref{rem0} for $||\lambda _j||\le 1,$ $j=\overline{1,\kappa}$, and arbitrary $\delta > 0,$ we obtain
		\begin{equation}\label{gr2}
		\left|1-\frac{L^{\frac{\kappa}{2}}\left( \frac{r}{\left\| \lambda _j\right\| }\right)}{L^{\frac{\kappa}{2}}(r)} \right| \le C\,   \left\| \lambda _j\right\|^{-\delta} h_{\tau}\left(\frac{1}{\left\| \lambda _j\right\|}\right)g(r).
		\end{equation}
		
		Hence, by  (\ref{gr1}) and (\ref{gr2})
		\[
		|Q_r(\lambda _1,\dots \lambda _\kappa)-1|^2 \le k \sum\limits_{j = 1}^{\kappa}\left|\frac{L^{\frac{\kappa}{2}}\left( \frac{r}{\left\| \lambda_j\right\| }\right)}{L^{\frac{\kappa}{2}}(r)}-1\right|^{2}\] 
		\[\le C\sum\limits_{j = 1}^{\kappa} h^{2}_{\tau}\left(\frac{1}{\left\| \lambda _j\right\|}\right)g^{2}(r) \max\left(\left\| \lambda _j\right\|^{-\delta}, \left\| \lambda _j\right\|^{\delta}\right),
		\]
		for $(\lambda _1,\dots \lambda _\kappa)\in A(r)$.

		Notice, that in the case $\tau = 0$ for any  $\delta>0$ there exists $C>0$ such that $h_0(x)=\ln(x)<Cx^{\delta},\,x \ge 1$, and $h_0(x)=\ln(x)<Cx^{-\delta},\,x < 1$. Hence, by Lemma~\ref{finint} for $-\tau \le \frac{d -\kappa\alpha}{2}$ we get
		\[\int\limits_{A(r)\cap [0,1]^{\kappa d}}\frac{ h^{2}_{\tau}\left(\frac{1}{\left\| \lambda _j\right\|}\right)\max\left(\left\| \lambda_j\right\|^{-\delta},\left\|\lambda_j\right\|^{\delta}\right)\left|\mathcal{K}\left(\sum\limits_{i=1}^{\kappa}\lambda_i\right)\right|^2\,\mathrm{d}\lambda_1 \dots \mathrm{d}\lambda _\kappa }{\left\| \lambda _1\right\| ^{d-\alpha}\dots\left\| \lambda _\kappa\right\| ^{d-\alpha}} < \infty.\]

		Therefore, we obtain for sufficiently large $r$
		\[I_3\le C\,g^{2}(r) \sum\limits_{j = 1}^{\kappa}\int\limits_{A(r)\cap \mathbb{R}^{\kappa d}}\frac{ h^{2}_{\tau}\left(\frac{1}{\left\| \lambda _j\right\|}\right)\cdot \max\left(\left\| \lambda _j\right\|^{-\delta}, \left\| \lambda _j\right\|^{\delta}\right) }{\left\| \lambda _1\right\| ^{d-\alpha}\dots\left\| \lambda _\kappa\right\| ^{d-\alpha}}\]
		\[
		\times|\mathcal{K}(\lambda _1+\dots \lambda _\kappa)|^2\,\mathrm{d}\lambda
		_1 \dots \mathrm{d}\lambda _\kappa\le C\,g^{2}(r) \int\limits_{A(r)\cap \mathbb{R}^{\kappa d}}\frac{ h^{2}_{\tau}\left(\frac{1}{\left\| \lambda _1\right\|}\right) }{\left\| \lambda _1\right\| ^{d-\alpha}\dots\left\| \lambda _\kappa\right\| ^{d-\alpha}}\] 
		\begin{equation}\label{del}
		\times\max\left(\left\| \lambda _1\right\|^{-\delta}, \left\| \lambda _1\right\|^{\delta}\right)|\mathcal{K}(\lambda _1+\dots \lambda _\kappa)|^2\,\mathrm{d}\lambda
		_1 \dots \mathrm{d}\lambda _\kappa \le C\,g^{2}(r).
		\end{equation}

		It follows from Assumption~\ref{ass2} and the specification  of the estimate (\ref{upper})  in the proof of Theorem~\ref{th5}  that for each positive $\delta$ there exists $r_0>0$ such that for all $r\ge r_0,$   $(\lambda _1,\dots,\lambda _\kappa)\in B_{(1,\mu_2,\dots,\mu_\kappa)}=\{(\lambda _1,\dots,\lambda _\kappa)\in\mathbb{R}^{\kappa d}: ||\lambda_j||\le 1,\ \mbox{if}\ \mu_j=-1,\ \mbox{and } ||\lambda_j||> 1, \ \mbox{if}\ \mu_j=1, j=\overline{1,k}\},$ and $\mu_j\in \{-1,1\},$ it holds
		\[
		\frac{|\mathcal{K}(\lambda _1+\dots +\lambda _\kappa)|^2\left(Q_r(\lambda _1,\dots\lambda _\kappa)-1\right)^2}{\left\| \lambda _1\right\| ^{d-\alpha}\dots
			\left\| \lambda _\kappa\right\| ^{d-\alpha}}\le \frac{C\, |\mathcal{K}(\lambda _1 +\dots+\lambda _\kappa)|^2}{\left\| \lambda _1\right\| ^{d-\alpha}
			\dots\left\| \lambda _\kappa\right\| ^{d-\alpha}}\]
		\[+C\,\frac{|\mathcal{K}(\lambda _1+\dots+\lambda _\kappa)|^2}{\left\| \lambda _1\right\| ^{d-\alpha-\delta}
			\left\| \lambda _2\right\| ^{d-\alpha-\mu_2\delta}\dots\left\| \lambda _\kappa\right\| ^{d-\alpha-\mu_\kappa\delta}}.
		\]
		
		Since the integrands are non-negative, we can estimate $I_4$ as it is shown below
		\[ I_4\le \kappa\int\limits_{\mathbb{R}^{(\kappa-1)d}}\int\limits_{||\lambda _1||> r^\gamma}\frac{|\mathcal{K}(\lambda _1+\dots+\lambda _\kappa)|^2\left(Q_r(\lambda_1,\dots,\lambda_\kappa)-1\right)^2\mathrm{d}\lambda
			_1 \dots\mathrm{d}\lambda _\kappa}{\left\| \lambda _1\right\| ^{d-\alpha}\dots\left\| \lambda _\kappa\right\| ^{d-\alpha}}\]
		\[ \le C\int\limits_{\mathbb{R}^{(\kappa-1)d}}\int\limits_{||\lambda _1||> r^\gamma}\frac{|\mathcal{K}(\lambda _1+\dots+\lambda _2)|^2\,\mathrm{d}\lambda
			_1 \dots\mathrm{d}\lambda _\kappa}{\left\| \lambda _1\right\| ^{d-\alpha}\dots\left\| \lambda _\kappa\right\| ^{d-\alpha}}\]
		\[+\,  C\sum\limits_{\lfrac{\mu_i\in\{0,1,-1\}}{i \in \overline{1,\kappa}}} \int\limits_{\mathbb{R}^{(\kappa-1)d}}\int\limits_{||\lambda _1||> r^\gamma}\frac{|\mathcal{K}(\lambda _1+\dots +\lambda _\kappa)|^2\mathrm{d}\lambda_1 \dots\mathrm{d}\lambda_\kappa}{\left\| \lambda _1\right\| ^{d-\alpha-\delta}\left\| \lambda _2\right\| ^{d-\alpha-\mu_2\delta}\dots\left\| \lambda _\kappa\right\| ^{d-\alpha-\mu_\kappa\delta}}\]
		\begin{equation}\label{midest}
		\le C\max_{\lfrac{ \mu_i\in\{0,1,-1\}}{i \in \overline{2,\kappa} }}\int\limits_{\mathbb{R}^{(\kappa-1)d}}\int\limits_{||\lambda _1||> r^\gamma}\frac{|\mathcal{K}(\lambda_1 +\dots +\lambda _\kappa)|^2\mathrm{d}\lambda_1 \dots\mathrm{d}\lambda_\kappa}{\left\| \lambda _1\right\| ^{d-\alpha-\delta}\left\|\lambda _2\right\| ^{d-\alpha-\mu_2\delta}\dots\left\| \lambda _\kappa\right\| ^{d-\alpha-\mu_\kappa\delta}}.
		\end{equation}
		Replacing $\lambda_1 + \lambda_2$ by $u$ we obtain
		\[I_4\le C\max_{\lfrac{ \mu_i\in\{0,1,-1\}}{i \in \overline{2,\kappa} }}\int\limits_{\mathbb{R}^{(\kappa-1)d}}\int\limits_{||\lambda _1||> r^\gamma}\frac{|\mathcal{K}(u + \lambda _3+\dots +\lambda _\kappa)|^2}{\left\| \lambda _1\right\| ^{d-\alpha-\delta}\left\|u - \lambda _1\right\| ^{d-\alpha-\mu_2\delta}}\]
		\[
		\times\frac{\mathrm{d}\lambda_1\mathrm{d}u\mathrm{d}\lambda_3 \dots\mathrm{d}\lambda_\kappa}{\left\| \lambda _3\right\| ^{d-\alpha-\mu_3\delta}\dots\left\| \lambda _\kappa\right\| ^{d-\alpha-\mu_\kappa\delta}}\le C\max\limits_{\lfrac{ \mu_i\in\{0,1,-1\}}{i \in \overline{2,\kappa} }} \int\limits_{\mathbb{R}^{(\kappa-1)d}}\frac{1}{\left\|u\right\| ^{d-2\alpha-(\mu_2+1)\delta}}\]
		\[\times\frac{|\mathcal{K}(u + \lambda _3+\dots +\lambda _\kappa)|^2}{\left\| \lambda _3\right\| ^{d-\alpha-\mu_3\delta}\dots\left\| \lambda _\kappa\right\| ^{d-\alpha-\mu_\kappa\delta}}\int\limits_{\|\lambda _1\|> \frac{r^\gamma}{\|u\|}}\frac{\mathrm{d}\lambda_1 \mathrm{d}u\,\mathrm{d}\lambda_3 \dots\mathrm{d}\lambda_\kappa}{\left\| \lambda _1\right\| ^{d-\alpha-\delta}\left\|\frac{u}{\left\|u\right\|} - \lambda _1\right\| ^{d-\alpha-\mu_2\delta}}.\]
		Taking into account that for $\delta\in (0,\min(\alpha,{d}/{\kappa}-\alpha))$
		\[\sup_{u\in\mathbb{R}^{d}\setminus\{0\}}\int\limits_{\mathbb{R}^{d}}\frac{\,\mathrm{d}\lambda
			_1}{\left\| \lambda _1\right\| ^{d-\alpha-\delta}\left\|\frac{u}{\|u\|}- \lambda _1\right\| ^{d-\alpha-\mu_2\delta}}\le C,\]
		we obtain
		\[I_4\le C\max_{\lfrac{ \mu_i\in\{0,1,-1\}}{i \in \overline{3,\kappa} }}\int\limits_{\mathbb{R}^{(\kappa-2)d}}\left[\max_{\mu_2\in\{0,1,-1\}}\int\limits_{||u||\le r^\gamma_0}\frac{|\mathcal{K}(u + \lambda _3+\dots +\lambda _\kappa)|^2}{\left\|u\right\| ^{d-2\alpha-(\mu_2+1)\delta}}\right.\]
		\[\times\frac{\mathrm{d}\lambda_3 \dots\mathrm{d}\lambda_\kappa}{\left\| \lambda _3\right\| ^{d-\alpha-\mu_3\delta}\dots\left\| \lambda _\kappa\right\| ^{d-\alpha-\mu_\kappa\delta}}\left.\int\limits_{||\lambda _1||> r^{\gamma-\gamma_0}}\frac{\mathrm{d}\lambda_1 \mathrm{d}u}{\left\| \lambda _1\right\| ^{d-\alpha-\delta}\left\|\frac{u}{\left\|u\right\|} - \lambda _1\right\| ^{d-\alpha-\mu_2\delta}}\right.\]
		\[+\left.\max_{\mu_i\in\{0,1,-1\}}\int\limits_{||u||> r^\gamma_0}\frac{|\mathcal{K}(u + \lambda _3+\dots +\lambda _\kappa)|^2\mathrm{d}u\,\mathrm{d}\lambda_3 \dots\mathrm{d}\lambda_\kappa}{\left\|u\right\| ^{d-2\alpha-(\mu_2+1)\delta}\left\| \lambda _3\right\| ^{d-\alpha-\mu_3\delta}\dots\left\| \lambda _\kappa\right\| ^{d-\alpha-\mu_\kappa\delta}}\right],\]
		where $\gamma_0\in (0,\gamma).$
		
		By Lemma~\ref{finint}, there exists $r_0>0$ such that for all $r\ge r_0$ the first summand is bounded by
		\[C\max_{\mu_2\in\{0,1,-1\}}\int\limits_{||u||\le r^\gamma_0}\frac{|\mathcal{K}(u + \lambda _3+\dots +\lambda _\kappa)|^2\mathrm{d}u\mathrm{d}\lambda_3 \dots\mathrm{d}\lambda_\kappa}{\left\|u\right\| ^{d-2\alpha-(\mu_2+1)\delta}\left\| \lambda _3\right\| ^{d-\alpha-\mu_3\delta}\dots\left\| \lambda _\kappa\right\| ^{d-\alpha-\mu_\kappa\delta}}\]
		\[\times\int\limits_{||\lambda _1||> r^{\gamma-\gamma_0}}\frac{\mathrm{d}\lambda_1} {\left\| \lambda _1\right\| ^{2d-2\alpha-\delta-\mu_2\delta}}\le Cr^{-(\gamma-\gamma_0)(d-2\alpha-2\delta)}.\]
		
		Therefore, for sufficiently large $r,$
		\[I_4\le Cr^{-(\gamma-\gamma_0)(d-2\alpha-2\delta)}\]
		\[+C\max_{\lfrac{ \mu_i\in\{0,1,-1\}}{i \in \overline{3,\kappa} }} \int\limits_{\mathbb{R}^{(\kappa-2)d}}\int\limits_{||u||> r^{\gamma_0}}\frac{|\mathcal{K}(u + \lambda _3+\dots +\lambda _\kappa)|^2\mathrm{d}u\mathrm{d}\lambda_3 \dots\mathrm{d}\lambda_\kappa}{\left\|u\right\| ^{d-2\alpha-2\delta}\left\| \lambda _3\right\| ^{d-\alpha-\mu_3\delta}\dots\left\| \lambda _\kappa\right\| ^{d-\alpha-\mu_\kappa\delta}}.
		\]
		
		Notice that the second summand here coincides with \rm(\ref{midest}) if $\kappa$ is replaced by $\kappa-1$. Thus, we can repeat the above procedure $\kappa-2$ more times and get the result
		\begin{equation}\label{+}
		I_4\le Cr^{-(\gamma-\gamma_0)(d-2\alpha-2\delta)}+ \dots + Cr^{-(\gamma_{\kappa-3}-\gamma_{\kappa-2})(d-\kappa\alpha-\kappa\delta)} + C\hspace{-1em}\int\limits_{\|u\|> r^{\gamma_{\kappa-2}}}
		\frac{|\mathcal{K}(u)|^2\,\mathrm{d}u}{\left\| u\right\| ^{d-\kappa\alpha-\kappa\delta}},
		\end{equation}
		where $\gamma>\gamma_0>\gamma_1>\dots>\gamma_{\kappa-2}.$
		
		Using integration formula for polar coordinates and estimate (\ref{l2a}) we obtain
		\[
		\int\limits_{\|u\|> r^{\gamma_{\kappa-2}}}
		\frac{|\mathcal{K}(u)|^2\,\mathrm{d}u}{\left\| u\right\| ^{d-\kappa\alpha-\kappa\delta}}\leq  \int\limits_{r^{\gamma_{\kappa-2}}}^{\infty}t^{d-1}\int\limits_{S_{d-1}(1)}\frac{|\mathcal{K}(\omega t)|^2}{t^{d - \kappa\alpha-\kappa\delta}}\,\mathrm{d}\omega\mathrm{d}t
		\leq
		C\int\limits_{r^{\gamma_{\kappa-2}}}^{\infty}
		\frac{\mathrm{d}t}{t^{d-\kappa(\alpha+\delta)}}
		\]
		\begin{equation}\label{uppp}	    
		\leq C\,r^{-\gamma_{\kappa-2} (d - 1-\kappa(\alpha+\delta))}.
		\end{equation}
		
		Now let us consider the case $\tau<0$. In this case by Theorem~1.5.6 \cite{bin} for any $\delta>0$ we can estimate $g(r)$ as follows 
		\begin{equation}\label{less0}
		g(r)\le C\,r^{\tau+\delta}.
		\end{equation}
		Combining estimates (\ref{up11}), (\ref{38}), (\ref{del}), (\ref{+}), (\ref{uppp}),(\ref{less0})  we obtain
		\[{\rho}\left( \frac{\kappa!\,K_r}{C_\kappa\,r^{d-\frac{\kappa\alpha}{2}}L^{\frac{\kappa}{2}}(r)},X_\kappa(\Delta)\right)\le C\left(r^{-\frac{a\alpha(d-1-\kappa\alpha)}{(2+a)(d-1-(\kappa-1)\alpha)}+\delta}+\left(
		r^{2\tau + 2\delta}+ r^{-(\gamma-\gamma_0)(d-2\alpha-2\delta)}\right.\right.\]
		\[ \left.+ \dots + r^{-(\gamma_{\kappa-3}-\gamma_{\kappa-2})(d-\kappa\alpha-\kappa\delta)}\left.+ r^{-\gamma_{\kappa-2} (d - 1 -\kappa\alpha-\kappa\delta)}\right)^{\frac{a}{2+a}}\right).
		\]

		Therefore, for any  $\tilde\varkappa_1\in (0,\varkappa_0)$ one can choose a sufficiently small $\delta>0$ such that 
		\begin{equation}\label{bou}
		{\rho}\left( \frac{\kappa!\,K_r}{C_\kappa\,r^{d-1-\frac{\kappa\alpha}{2}}L^{\frac{\kappa}{2}}(r)},X_\kappa(\Delta)\right)\le Cr^\delta\left(r^{-\frac{a\alpha(d-1-\kappa\alpha)}{(2+a)(d-1-(\kappa-1)\alpha)}}+ r^{-\frac{a\tilde\varkappa_1}{2+a}}\right),
		\end{equation}
		where  \[\varkappa_0:=\sup_{1>\gamma>\gamma_0>\dots>\gamma_{\kappa-1}=0}\min\left(-2\tau,(\gamma-\gamma_0)(d-2\alpha), \dots , \right.\]
		\[\left.(\gamma_{\kappa-3}-\gamma_{\kappa-2})(d-\kappa\alpha), \left(\gamma_{\kappa-2}-\gamma_{\kappa-1}\right) (d-1-\kappa\alpha)\right).
		\]
		\begin{lemma}\label{max}
			Let $\mathbf{x} =$ $(x_0,\dots,x_n)\in \mathbb{R}^{n+1}_{+}$ be some fixed vector and $\mathbf{\Gamma} = \left\{\gamma = (\gamma_1,\dots,\gamma_{n+1}) \left|\right.\right.$
			$\left.b=\gamma_0>\gamma_1>\dots>\gamma_{n+1}=0\right\}$.
			
			The function $G(\gamma) = \min\limits_{i}\left(\gamma_i-\gamma_{i+1}\right)x_i$ 
			reaches its maximum at $\bar{\gamma}=(\bar{\gamma}_0,\dots,\bar{\gamma}_{n+1})\in \mathbf{\Gamma}$ such that for any $0 \le i \le n$ it holds
			\begin{equation}\label{midpr}
			\left(\bar{\gamma_i}-\bar{\gamma}_{i+1}\right)x_i = \left(\bar{\gamma}_{i+1} -\bar{\gamma}_{i+2}\right)x_{i+1}.
			\end{equation}	
		\end{lemma}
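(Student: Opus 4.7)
The plan is to show that the equalization condition (\ref{midpr}) produces the unique candidate for a maximizer, and then verify by a direct telescoping/summation argument that no element of $\mathbf{\Gamma}$ can give a larger value of $G$. First, suppose $\bar{\gamma}\in\mathbf{\Gamma}$ satisfies (\ref{midpr}). Then by transitivity all the quantities $(\bar{\gamma}_i-\bar{\gamma}_{i+1})x_i$, $0\le i\le n$, share a common value $m$. Hence $\bar{\gamma}_i-\bar{\gamma}_{i+1}=m/x_i$, and telescoping from $i=0$ to $i=n$ together with $\bar{\gamma}_0=b$, $\bar{\gamma}_{n+1}=0$ yields
\begin{equation*}
b=\sum_{i=0}^n(\bar{\gamma}_i-\bar{\gamma}_{i+1})=m\sum_{i=0}^n\frac{1}{x_i},\qquad\text{so}\qquad G(\bar{\gamma})=m=\frac{b}{\sum_{i=0}^n 1/x_i}.
\end{equation*}
In particular $m>0$, which is consistent with the strict inequalities $b=\bar{\gamma}_0>\bar{\gamma}_1>\dots>\bar{\gamma}_{n+1}=0$ required by membership in $\mathbf{\Gamma}$.

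Next I would show that $G(\gamma)\le m$ for every $\gamma\in\mathbf{\Gamma}$. For an arbitrary such $\gamma$, the definition of the minimum gives $(\gamma_i-\gamma_{i+1})x_i\ge G(\gamma)$ for each $i$, i.e.\ $\gamma_i-\gamma_{i+1}\ge G(\gamma)/x_i$. Summing this inequality from $i=0$ to $i=n$ and again telescoping yields
\begin{equation*}
b=\sum_{i=0}^n(\gamma_i-\gamma_{i+1})\ge G(\gamma)\sum_{i=0}^n\frac{1}{x_i},
\end{equation*}
which rearranges to $G(\gamma)\le b\bigl/\sum_{i=0}^n x_i^{-1}=G(\bar{\gamma})$. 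Hence $\bar{\gamma}$ attains the maximum. For uniqueness of the maximizer, note that if $G(\gamma)=m$ then equality must hold in the summation above, which forces $(\gamma_i-\gamma_{i+1})x_i=m$ for every $i$, i.e.\ $\gamma$ itself satisfies (\ref{midpr}) and is determined uniquely by the recursion $\gamma_{i+1}=\gamma_i-m/x_i$.

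I do not expect a genuine obstacle here: the whole argument is a one-line Cauchy--Schwarz-style telescoping after the correct reformulation of the min. The only mild care is to confirm that the purported maximizer lies in the open set $\mathbf{\Gamma}$ (strict inequalities), which is immediate because all differences $\bar{\gamma}_i-\bar{\gamma}_{i+1}=m/x_i$ are strictly positive. No compactness/closure gymnastics are required since the value $m=b/\sum x_i^{-1}$ is simultaneously a lower bound from the equalizing point and an upper bound from the telescoping estimate.
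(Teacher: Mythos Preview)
Your argument is correct and, in fact, cleaner than the paper's. The paper proves the lemma by a perturbation argument: given any $\hat\gamma\neq\bar\gamma$, the constraint $\sum_i(\hat\gamma_i-\hat\gamma_{i+1})=b$ forces some gap to be strictly smaller than the corresponding gap of $\bar\gamma$, and the minimum then drops below $G(\bar\gamma)$. Your route is different: you first read off from (\ref{midpr}) that the common value must be $m=b/\sum_i x_i^{-1}$, and then bound every competitor by the same telescoping inequality $b\ge G(\gamma)\sum_i x_i^{-1}$. This not only establishes optimality and uniqueness in one stroke but also produces the explicit maximum value, which is exactly the quantity the paper needs immediately afterwards when it writes $\varkappa_0$ as a harmonic-type expression. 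The paper's perturbation argument, by contrast, never computes $m$ and leaves the existence of $\bar\gamma\in\mathbf{\Gamma}$ implicit; you handle that explicitly by noting $m/x_i>0$.
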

		
		\begin{proof}
			Let us show that any deviation of $\gamma$ from $\bar{\gamma}$ leads to a smaller result.
			Consider a vector $\hat{\gamma}$ such that for some $i \in \overline{1,n}$ and some $\varepsilon>0$ the following relation is true 
			\[\hat{\gamma_i}-\hat{\gamma}_{i+1} = \bar{\gamma_i}-\bar{\gamma}_{i+1} + \varepsilon.\] 
			Since $\sum\limits_{i=0}^{n}\hat{\gamma_i}-\hat{\gamma}_{i+1}= \hat{\gamma_0}-\hat{\gamma}_{n+1} = b$ we can conclude that there exist some $j \neq i,\, j \in \overline{1,n},$ and $\varepsilon_1>0$ such that $\hat{\gamma_j}-\hat{\gamma}_{j+1} = \bar{\gamma_j}-\bar{\gamma}_{j+1} - \varepsilon_1$. 
			
			Obviously, in this case 
			\[G(\hat{\gamma}) \le \left(\hat{\gamma_j}-\hat{\gamma}_{j+1}\right)x_j = \left(\bar{\gamma_j}-\bar{\gamma}_{j+1} - \varepsilon_1\right)x_j =\left(\bar{\gamma_j}-\bar{\gamma}_{j+1}\right)x_j - \varepsilon_1 x_j\]
			Since $\varepsilon_1 >0$ and $x_j >0$ it follows from \rm(\ref{midpr}) that
			\[G(\hat{\gamma}) \le \left(\bar{\gamma_j}-\bar{\gamma}_{j+1}\right)x_j - \varepsilon_1 x_j < \left(\bar{\gamma_j}-\bar{\gamma}_{j+1}\right)x_j = G(\bar{\gamma}).\]
			So it's clearly seen that any deviation from $\bar{\gamma}$ will yield a smaller result.
		\end{proof}
		
		Note, that for fixed $\gamma\in(0,1)$ by Lemma~\ref{max}
		\[\sup_{\gamma>\gamma_0>\dots>\gamma_{\kappa-1}=0}\min\left((\gamma-\gamma_0)(d-2\alpha), \dots ,(\gamma_{\kappa-3}-\gamma_{\kappa-2})(d-\kappa\alpha),\left(\gamma_{\kappa-2}-\gamma_{\kappa-1}\right) (d-\kappa\alpha)\right)\]
		\[=\frac{\gamma}{\frac{1}{d-2\alpha}+ \dots + \frac{1}{d-\kappa\alpha}+\frac{1}{d-1-\kappa\alpha}}
		\]
		and 
		\[\sup_{\gamma\in(0,1)}\frac{\gamma}{\frac{1}{d-2\alpha}+ \dots +\frac{1}{d-\kappa\alpha}+ \frac{1}{d-1-\kappa\alpha}}=\frac{1}{\frac{1}{d-2\alpha}+ \dots +\frac{1}{d-\kappa\alpha}+ \frac{1}{d-1-\kappa\alpha}}.\]
	Thus, $\varkappa_0=\varkappa_1$, and from (\ref{bou}) the first statement of the theorem follows. 
		
		Now let us consider the case $\tau = 0$.
		In this case by Theorem~1.5.6 \cite{bin} for any $s>0$ and sufficiently large $r$
		\begin{equation}\label{eq0}
		g(r)>r^{-s}.
		\end{equation}
		By combining estimates (\ref{up11}), (\ref{38}), (\ref{del}), (\ref{+}), (\ref{uppp}) and using (\ref{eq0}) to replace all powers of $r$ by $g^2(r)$  we obtain		
		\[{\rho}\left( \frac{\kappa!\,K_r}{C_\kappa\,r^{d-1-\frac{\kappa\alpha}{2}}L^{\frac{\kappa}{2}}(r)},X_\kappa(\Delta)\right)\le C\left(g^2(r)+ g^{\frac{2a}{2+a}}(r)\right).\]
		
		Since $a\leq 1,$
		it follows that
		\[{\rho}\left( \frac{\kappa!\,K_r}{C_\kappa\,r^{d-1-\frac{\kappa\alpha}{2}}L^{\frac{\kappa}{2}}(r)},X_\kappa(\Delta)\right)\le Cg^{\frac{2a}{2+a}}(r).\]
		This proves the second statement of the theorem.
        \end{proof}
\begin{rem}
	For example, for $g(x) = \frac{1}{\ln(x)}$ in Remark~\ref{log} we obtain
	\[
	{\rho}\left( \frac{\kappa!\,K_r}{C_\kappa\,r^{d-1-\frac{\kappa\alpha}{2}}L^{\frac{\kappa}{2}}(r)},X_\kappa(\Delta)\right) \leq C\ln^{-\frac{2a}{2+a}}(r).
	\]
\end{rem}

	Let us study how the upper bounds in the rate of convergence perform depending on their parameters. 
	
	When $\tau = 0$ it is quite straightforward to see that for $g(r)$ close to 0 the upper bound decreases as $a$ increases.
	
	For the case $\tau < 0$, let us investigate the upper bound of $\varkappa$ as a function of $\alpha$. 
	\[
	\varkappa <\frac{a}{2+a}\min\left(\frac{\alpha(d-1-\kappa\alpha)}{%
		d-1-(\kappa-1)\alpha},\varkappa_1\right) = \frac{a}{2+a}\min\left(\frac{1}{%
		\frac{1}{\alpha}+\frac{1}{d-1-\kappa\alpha}},\varkappa_1\right).
	\]
	
	Since $\varkappa_1 > 0$, it is obvious that if $\alpha\rightarrow 0$ or $\alpha\rightarrow \frac{d-1}{\kappa}$ the upper bound decreases to~0. Thus, as expected, for these values of $\alpha$ our estimate does not provide a good rate of convergence. 
	
	Let us determine $\alpha$ that corresponds to the best possible bound. We have to compare $\frac{1}{\frac{1}{\alpha}+\frac{1}{d-1-\kappa\alpha}}$ and $\frac{1}{\frac{1}{d-2\alpha}+ \dots +\frac{1}{d-\kappa\alpha} +\frac{1}{d-1-\kappa\alpha}}.$
	Notice, that $\frac{1}{\alpha}$ is a decreasing function of $\alpha$, but  $\frac{1}{d-2\alpha}+ \dots+\frac{1}{d-\kappa\alpha}$ is an increasing function of $\alpha$ on $(0,\frac{d-1}{\kappa}).$ Also, for $\alpha\rightarrow\frac{d-1}{\kappa}$ we get $\frac{1}{\alpha}\rightarrow\frac{\kappa}{d-1},$ and 
	\[\frac{1}{d-2\alpha}+ \dots+\frac{1}{d-\kappa\alpha} \rightarrow \frac{1}{(d-1)(1-\frac{2}{\kappa}) +1}
	+ \dots+\frac{1}{(d-1)(1-\frac{\kappa}{\kappa}) +1}.\]
	
	Hence, we have two cases depending on the values of parameters $\kappa$ and $d$.
	
	{\it Case 1}. If \begin{equation}\label{bb}\frac{\kappa}{d-1} < \frac{1}{(d-1)(1-\frac{2}{\kappa}) +1}
	+ \dots+\frac{1}{(d-1)(1-\frac{\kappa}{\kappa}) +1}\end{equation}
	then there exists $\alpha^{\star} = \arg(\frac{1}{\alpha} = \frac{1}{d-2\alpha}+ \dots+\frac{1}{d-\kappa\alpha})$ that provides the best possible bound
	\[
	\varkappa <	\frac{a}{2+a}\min\left(\frac{1}{%
		\frac{1}{\alpha^{\star}}+\frac{1}{d-1-\kappa\alpha^{\star}}},-2\tau\right).
	\]
	
	{\it Case 2}. If condition (\ref{bb}) doesn't hold, then, regardless of $\alpha$, we have $\frac{1}{%
	\frac{1}{\alpha}+\frac{1}{d-1-\kappa\alpha}} < \varkappa_1$ and the upper bound is $\frac{a}{2+a}\left(\frac{1}{
		\frac{1}{\alpha}+\frac{1}{d-1-\kappa\alpha}}\right)$.
	Choosing $\alpha = \frac{d-1}{\kappa + 1}$ that maximizes this expression we get the best bound
	\[
	\varkappa < \frac{a}{2+a}\min\left(\frac{d-1}{2 + 2\kappa}, -2\tau\right).
    \]
	 Since the bound does not depend on the right part of $\varkappa_1$ in this case, then the rate of converge is determined only by the tail of the Hermite expansion of the function $G(\cdot)$ and by a parameter of the random field $\tau$ introduced in Assumption~\ref{ass2}. 
	\begin{rem}
	If $\kappa = 1$ then there are no such $d$ that condition (\ref{bb}) holds true and only Case~2 is applicable.
	\end{rem}
	 
	\begin{example}
		If $d=2$ then for any $\kappa\in \mathbb{N}$ it holds $\frac{1}{2-\frac{2}{\kappa}}
		+ \dots+\frac{1}{2-\frac{\kappa -1}{\kappa}} + 1< \kappa$. Therefore, only Case~2 is possible and for any $\kappa$ the best bound is
	\[
	\varkappa < \frac{a}{2+a}\min\left(\frac{1}{2 + 2\kappa}, -2\tau\right).
	\]
	\end{example}
	\begin{example}
		If $\kappa =2$ and $d = 4$ then condition (\ref{bb}) holds and $\alpha^{\star} = \arg(\frac{1}{\alpha} = \frac{1}{4-2\alpha}) = 4/3$. Using the fact that $a = 1$ for $\kappa =2$, we get that the best bound is $\varkappa < \min\left(4/45, -2\tau/3\right)$.
	\end{example}

\end{document}